\newcommand\be{\begin{equation}}
\newcommand\ee{\end{equation}}
\newcommand\bea{\begin{eqnarray}}
\newcommand\eea{\end{eqnarray}}
\newcommand\bi{\begin{itemize}}
\newcommand\ei{\end{itemize}}
\newcommand\ben{\begin{enumerate}}
\newcommand\een{\end{enumerate}}
\newtheorem{thm}{Theorem}[section]
\newtheorem{conj}[thm]{Conjecture}
\newtheorem{lem}[thm]{Lemma}
\newtheorem{defi}[thm]{Definition}
\newtheorem{rmk}[thm]{Remark}
\newcommand{\tbf}[1]{\textbf{#1}}
\newcommand{\mcal}[1]{\mathcal{#1}}
\newcommand{\twocase}[5]{#1 \begin{cases} #2 & \text{#3}\\ #4
&\text{#5} \end{cases}   }
\newcommand{\R}{\ensuremath{\mathbb{R}}}
\newcommand{\Z}{\ensuremath{\mathbb{Z}}}
\newcommand{\N}{\mathbb{N}}
\numberwithin{equation}{section}
\begin{document}

\title{Benford Behavior of a Higher-Dimensional Fragmentation Process}

\author{Irfan Durmi\'{c}}
\email{idurmic@student.jyu.fi} \address{Department of
  Mathematics and Statistics, University of Jyväskylä, Jyväskylä, FI, 40740}
\author{Steven J. Miller}
\email{ sjm1@williams.edu} \address{Department of
  Mathematics and Statistics, Williams College, Williamstown, MA 01267}

\date{\today}

\maketitle

\begin{center}
    \begin{abstract}
Nature and our world have a bias! Roughly $30\%$ of the time the number $1$ occurs as the leading digit in many datasets base $10$. This phenomenon is known as Benford's law and it arrises in diverse fields such as the stock market, optimizing computers, street addresses, Fibonacci numbers, and is often used to detect possible fraud. 
Based on previous work, we know that different forms of a one-dimensional stick fragmentation result in pieces whose lengths follow Benford's Law. We generalize this result and show that this can be extended to any finite-dimensional ``volume''. We further conjecture that even lower-dimensional volumes, under the unrestricted fragmentation process, follow Benford's Law.
    \end{abstract}
\end{center}

\tableofcontents



\section{Overview of Benford's Law and Results}\label{sec:introandhist}
At the beginning of the $20^{\rm{th}}$ century, the astronomer and mathematician Simon Newcomb~\cite{Ne} observed that logarithmic tables at his workplace showed a lot of wear and tear at the early pages, but substantially less on the later pages.
Newcomb deduced that his colleagues had a ``bias" towards numbers starting with the digit $1$. In particular, in a sufficiently large dataset with a variety of scales, the digit $1$ shows up as the first digit roughly $30\%$ of the time, the digit $2$ about $18\%$ of the time, and so on. While he did come up with a mathematical model for this interesting relationship, his work remained mostly unnoticed.

It took another 57 years after Newcomb's discovery for physicist Frank Benford~\cite{Ben} to make the same observation as Newcomb: the first pages of logarithmic tables were used far more than others. As his paper was next to a well read physics article, it had a very different fate and this bias now wears his name. 
\begin{defi}[Weak Benford Behaviour] \label{def:benforiginal}
Let $F_d$ be the probability that, in a dataset, the first digit base $B$ is $d$. If 
\bea
  F_d \ \coloneqq \log_{B}{\frac{d \ + \ 1}{d}}, \label{eqn:benorg}
\eea
for $d \in \{1,2, \dots, B-1 \}$, we say that the dataset exhibits \emph{(weak) Benford behaviour}.
\end{defi}
It is important to note that the probabilities described by Equation~\eqref{eqn:benorg} are irrational and this fact implies that no finite dataset can have proportions that exactly match them. Only in the context of an infinite dataset that perfectly adheres to Benford's Law would we see the proportions of leading digits aligning exactly with these probabilities. In practice, when examining finite datasets, the focus shifts to how closely they approximate Benford's distribution. Typically, ``close" is interpreted in terms of the observed dataset being a good visual fit to the ``ideal'' Benford distribution of Equation~\eqref{eqn:benorg}. Such, and more intricacies related to Benford's law, are explored in more detail in~\cite{Mil}.\\
Given a base $B \geq 2$, any nonzero real number $r$ can be uniquely expressed in the form $r \  =  \ aB^n$, where $\abs{a} \in [1, B), n \in \Z$. This is usually referred to as \tbf{scientific notation}, and it motivates the definition of the \tbf{significand}.

\begin{defi}[Significand] \label{def:significand}
Given a base $B \ > \ 1$, we define the significand as the mapping $S_B: \mathbb R_{\ne 0} \rightarrow [1,B)$, where $S_B(x)$ is $\abs{a}$ when $x \  =  \ aB^n$ written in scientific notation with $|a| \in [1,B)$ and $n \in \mathbb{Z}$. 
\end{defi}

\noindent One also studies the \tbf{mantissa}, which is the fractional part of the logarithm (base $B$). 
\begin{defi}[Strong Benford's Law]\label{defi:benfstrong}
Writing any $x > 0$ in scientific notation as $S_B(x) B^{k(x)}$, 
with significand $S_B(x) \in [1,B)$ and $k(x)$ an integer, the 
dataset is said to satisfy \emph{strong Benford's law} if the 
probability of a significand being at most $x$ is $\log_B(x)$. 
Often, when referring to Benford's Law, authors actually mean 
strong Benford's Law. 
\end{defi}

This observation enabled mathematicians, economists, statisticians and other scientists to find ways to observe deviations from this pattern in real datasets, which is often a warning sign that there might be something off in a certain dataset. In particular, the IRS has shown substantial interest in Benford's Law because it enables them to detect \textit{potential} cases of fraud. In a world with finite resources, the ability to notice where it might be reasonable to invest our time and energy is valuable. With a multitude of applications, it is natural to wonder how prevalent a phenomenon like this is which motivated earlier work into decompositions and fragmentation processes. We continue this work in our paper.

In their paper on Benford's Law and Continuous Dependent Random Variables, Becker et al.~\cite{Beck}, amongst many other results, demonstrated that the lengths of stick pieces, created under an unrestricted stick fragmentation process, would follow the Benford distribution. In particular, they started with a stick of fixed length $L$ and they broke up this stick in a particular pattern. One chooses a proportion $p_i \in (0,1)$, where $i$ denotes the $i$-th cutting proportion used, and breaks a stick piece into two new pieces. For example, we cut the first piece of length $L$ with $p_1$ and get two new pieces of length $L(1-p_1)$ and $Lp_1$. Repeating this process $N$ times, we observe that, as $N \to \infty$, the different lengths follow Benford's Law.
This result inspired us to wonder whether or not this extends naturally into higher dimensions, and if we can analyse lower-dimensional quantities in higher dimensions to see whether or not those lower-dimensional quantities also exhibit Benford behavior.
As a concrete example, we can show that, under the unrestricted fragmentation model, the volume of a three-dimensional box is Benford. An interesting question is to check whether or not the areas of the new pieces also follow Benford's Law. We also look at the edges of the created boxes and see whether or not the lengths of those edges also follow Benford's Law.\par
Our main result is Theorem~\ref{thm:unrestrictedmdim}, which states that an unrestricted fragmentation process of an $m$-dimensional cube results in pieces whose $m$-dimensional volumes follow the Benford distribution. This theorem is a natural extension of Theorem $1.5$ of~\cite{Beck}. We go deeply into the details of this fragmentation process in Section~\ref{sec:proofofthm}, but we first present an overview of the key machinery we need. A visual presentation of this process, for $m \ = \ 3$, is given in Figures~\labelcref{fig:cubeabc,fig:3dfragmentationunrestricted}.
\begin{defi}[Mellin Transform, $\mcal{M}_f(s)$]\label{defi:mellintransform}
Let $f(t)$ be a continuous real-valued function on $[0,\infty)$. We define its Mellin transform, denoted $\mcal{M}_f(s)$, as follows:
\bea \label{eqn:mellintransdef}
\mcal{M}_f(s) \ := \ \int_0^\infty f(t) t^s \frac{\dd{t}}{t}. 
\eea
\end{defi}

\begin{defi}[Mellin Transform Condition]
We say a set of functions $\mcal{F}$ satisfies our Mellin transform condition if for any set of $f_u \in  \mcal{F}$ (not necessarily distinct) we have
\bea\label{eqn:mellincondition} 
\lim_{N\to\infty} \sum_{\ell = -\infty \atop \ell \neq 0}^\infty \abs{\prod_{u=1}^{Nm} \mathcal{M}_{f_u}\left(1-\frac{2\pi i \ell}{\log 10}\right)} \ = \ 0.
\eea
\end{defi}

Since $\mcal{M}_f(1) \ = \ 1$, as $f$ is a density, Equation~\eqref{eqn:mellincondition} states that the sum of the absolute values of the other terms tends to zero; note that the Mellin transform of a convolution is the product of the Mellin transforms, and thus Equation~\eqref{eqn:mellincondition} can be viewed as a density arising in our fragmentation.

\begin{defi}[Significand Indicator Function, $\varphi_s$] \label{defi:indicatorfunctionforsign}
For $s \in [1,10)$, let 
\bea\label{def:phissigindic} 
\twocase{\varphi_s(x) \ := \ }{1}{{\rm if\ the\ significand\ of\ $x$\ is\ at\ most\ $s$}}{0}{{\rm otherwise;}}
\eea
thus $\varphi_s$ is the indicator function of the event of a significand at most $s$.
\end{defi}
\begin{defi}[Density Cut]\label{defi:densitycut}
Let $f(t)$ be a probability density function that satisfies the Mellin transform condition as expressed in Equation~\eqref{eqn:mellincondition}. A density cut is then defined to be a random variable $P$ taken from the density $f(t)$. The actual values of the random variable $P$, i.e., when we choose  values for the density cuts, are denoted by $p_i$, where $i$ denotes the $i$-th density cut (cutting proportion) used.
\end{defi}
From now on, the terms ``cutting proportion'' and ``density cut'' will mean the same, and we always appeal to Definition~\ref{defi:densitycut} when talking about density cuts. Furthermore, for the purposes of this paper, we take our density cuts from the uniform distribution on $(0,1)$, but we could have used any distribution satisfying Equation~\eqref{eqn:mellincondition}. The details about how to deal with a more general probability density function mostly stem from the key ideas in Lemma~\ref{rmk:steve'sideaforsymmetry}.
\begin{rmk}\label{eqn:howtoviewdensitycuts}
We view density cuts as random numbers generated between $0$ and $1$.
\end{rmk}
\begin{thm}[Unrestricted $m$-dimensional Fragmentation Model]\label{thm:unrestrictedmdim} 
Fix a continuous probability density $f:(0,1)\to\R$ such that
\bea 
\lim_{N\to\infty} \sum_{\ell = -\infty \atop \ell \neq 0}^\infty \abs{\prod_{u=1}^{Nm} \mathcal{M}_{f_u}\left(1-\frac{2\pi i \ell}{\log 10}\right)} \ = \ 0,
\eea
where each $f_u$ is either $f(t)$ or $f(1-t)$ (the density of $1-P$ if $P$ has 
density $f$).  Given an $m$-dimensional box of $m$-dimensional volume $V$, 
we independently choose density cuts $p_1, p_2, \dots,p_{Nm \ - \ 1}, 
p_{Nm}$ from the unit interval stemming from the probability density function $f$ and the associated random variable $P$. 
After $N$ iterations we have
\bea \label{eqn:volumespieces}
V_1 &\ =\ & Vp_1p_2p_4\cdots p_{2^{Nm -2}} \cdot p_{2^{Nm - 1}}, \dots,  \nonumber\\
V_{(2^m)^N} &\ = \ &   V(1-p_1)(1-p_3)(1-p_7) \cdots (1-p_{2^{Nm - 1}-1})  (1-p_{2^{Nm}-1}) ,
\eea
and

\bea 
P_N(s) \ := \  \frac{ \sum_{i = 1}^{(2^m)^N} \varphi_s(V_i)}{(2^m)^N}
\eea 
is the fraction of partition pieces $V_1, \dots, V_{(2^m)^N}$ with a significand that is at most $s$ (see \eqref{def:phissigindic} for the definition of $\varphi_s$).  
Then

\begin{enumerate}

\item $\lim _{N \to \infty} \mathbb{E} [P_N(s)] = \log_{10}s$, and \label{expectation}

\item $\lim _{N \to \infty} {\rm Var}\left(P_N(s)\right) = 0$. \label{variance}

\end{enumerate}
\end{thm}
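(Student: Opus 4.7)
The plan is to adapt the Fourier/Mellin-transform machinery from the $1$-dimensional fragmentation of~\cite{Beck} to the higher-dimensional cutting tree, exploiting the fact that each piece's volume $V_i$ is a product of $V$ with $Nm$ independent random factors whose densities are either $f(t)$ or $f(1-t)$. The key identity is that for $Y>0$, $\mathbb{E}[\varphi_s(Y)] = \Pr[\{\log_{10}Y\} \leq \log_{10}s]$, so showing asymptotic Benford behavior reduces to showing that the density of $\{\log_{10}V_i\}$ converges to the uniform density on $[0,1)$.

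For part~(1), I would first write each $V_i = V\prod_{u=1}^{Nm} Q_u^{(i)}$, where the factors $Q_u^{(i)}$ are the independent cut proportions encountered along the path from the root to leaf $i$ in the fragmentation tree, each distributed according to $f(t)$ or $f(1-t)$. Because the Mellin transform of an independent product is the product of the Mellin transforms, the Fourier coefficients of the density of $\{\log_{10}V_i\}$ on $\mathbb{R}/\mathbb{Z}$ are
\[
\mathbb{E}\!\left[V_i^{-2\pi i \ell/\log 10}\right] \;=\; V^{-2\pi i \ell/\log 10}\prod_{u=1}^{Nm}\mathcal{M}_{f_u^{(i)}}\!\left(1-\tfrac{2\pi i \ell}{\log 10}\right).
\]
Hypothesis~\eqref{eqn:mellincondition} forces the sum of their moduli over $\ell \neq 0$ to $0$, and a standard Fourier-inversion bound on the circle then gives $\Pr[\{\log_{10}V_i\}\leq \log_{10}s]\to \log_{10}s$ for each $i$. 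Averaging this limit over the $(2^m)^N$ pieces yields $\mathbb{E}[P_N(s)]\to \log_{10}s$.

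For part~(2), I would expand $\operatorname{Var}(P_N(s)) = (2^m)^{-2N}\sum_{i,j}\operatorname{Cov}(\varphi_s(V_i),\varphi_s(V_j))$ and analyze each covariance via the joint distribution of $(V_i, V_j)$. For each pair, a subset $S_{ij}\subseteq\{1,\dots,Nm\}$ of cut indices lies at or above the least common ancestor of leaves $i$ and $j$ in the tree (yielding factors built from a common underlying $P_u$), while the remaining indices produce mutually independent factors from disjoint subtrees. A two-variable Fourier expansion of the joint density of $(\{\log_{10}V_i\},\{\log_{10}V_j\})$ then expresses $\mathbb{E}[\varphi_s(V_i)\varphi_s(V_j)]$ as a double series whose $(\ell_1,\ell_2)$-coefficient is a product of Mellin transforms of $f$ evaluated at $1 - 2\pi i(\ell_1+\ell_2)/\log 10$ on the shared indices and at $1 - 2\pi i\ell_1/\log 10$, $1 - 2\pi i\ell_2/\log 10$ on the independent ones. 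The Mellin condition then drives all $(\ell_1,\ell_2)\neq(0,0)$ modes to $0$, giving $\mathbb{E}[\varphi_s(V_i)\varphi_s(V_j)] \to (\log_{10}s)^2$ and hence $\operatorname{Var}(P_N(s)) \to 0$.

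The main obstacle will be the sub-case $\ell_1+\ell_2=0$ with $\ell_1\neq 0$, where the shared factors contribute $\mathcal{M}_{f_u}(1)=1$ and provide no decay, so the entire decay must come from the $2(Nm-|S_{ij}|)$ independent Mellin factors. To handle this I would combine the Mellin bound with a tree-counting estimate: the number of ordered pairs $(i,j)$ sharing all but the last $k$ levels of their path is $O((2^m)^{N+k})$, so pairs with small $Nm-|S_{ij}|$ are exponentially rare and their contribution to the variance is $o(1)$, while the remaining pairs receive enough independent-factor Mellin decay to send their covariances to $0$. Marrying this combinatorial count with the analytic Mellin decay, uniformly in the pair structure, is the technically delicate part, and is exactly what the $Nm$-factor form of hypothesis~\eqref{eqn:mellincondition} is designed to deliver.
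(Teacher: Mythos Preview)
Your proposal is correct and follows the same overall strategy as the paper: for the expectation, apply the Mellin/Fourier bound of~\cite{JKKKM} to each $V_i$ viewed as a product of $Nm$ independent factors and then average; for the variance, split pairs $(V_i,V_j)$ according to how many cut-factors they share, use the trivial bound $1$ for the rare high-overlap pairs, and extract Mellin decay from the many independent factors for the typical low-overlap pairs.

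The one genuine difference is in how the variance step is packaged. The paper conditions on the shared factors: having fixed the common prefix $V_1,V_2$ in~\eqref{eqn:steptowardsvarianceintegral}, it writes $\mathbb{E}[\varphi_s(V_i)\varphi_s(V_j)]$ as (essentially) a product of two \emph{one-variable} integrals $I(V_1)$ and $J(V_2)$, bounds each separately by the single-variable Mellin estimate with $Nm-M-1$ factors, and then chooses the threshold $M=\log N$. You instead propose a direct \emph{two-variable} Fourier expansion on $(\mathbb{R}/\mathbb{Z})^2$, which isolates the exact obstruction --- the diagonal $\ell_1+\ell_2=0$ where the shared Mellin factors collapse to $1$ --- and resolves it by the same tree-counting estimate. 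Both routes land on the identical dichotomy and the same $M=o(N)$ threshold; yours makes it more transparent \emph{why} the shared factors cannot supply decay, while the paper's conditioning avoids ever needing joint Fourier analysis and stays closer to the one-dimensional argument of~\cite{Beck}.
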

\begin{lem}\label{rmk:steve'sideaforsymmetry}
Without loss of generality, we can assume that each density $f_u(x)$ in the Mellin transform condition is symmetric around $x \ = \ 1/2$.
\end{lem}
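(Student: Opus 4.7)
The plan is to reduce the theorem for a general density $f$ to the symmetric case by passing to the symmetrized density $g(t) := (f(t) + f(1-t))/2$, then verifying (i) that the multiset of piece volumes has the same distribution under either cutting density, and (ii) that $g$ inherits the Mellin transform condition from $f$.

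First, I would construct a coupling between the $f$-process and the $g$-process that produces identical multisets of piece volumes. Run the original process with iid draws $p_u \sim f$, and independently draw fair coins $\epsilon_u \in \{0,1\}$. At each cut $u$, relabel the two children so that the ``left'' child has proportion $\tilde p_u := (1-\epsilon_u)p_u + \epsilon_u(1-p_u)$; this is a cosmetic swap of subtrees and does not alter the multiset of leaves of the cut tree. A direct computation shows the density of each $\tilde p_u$ is $g$, and the $\tilde p_u$ remain iid, so the relabeled process is an instance of the $g$-process.

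Second, because $P_N(s) = (2^m)^{-N}\sum_i \varphi_s(V_i)$ is a symmetric function of the multiset $\{V_i\}$, the coupling preserves $P_N(s)$ pointwise, so $P_N(s)$ has the same distribution under the $f$- and $g$-processes. Consequently the expectation and variance limits claimed in Theorem~\ref{thm:unrestrictedmdim} for $f$ hold if and only if they hold for $g$, and it suffices to prove the theorem assuming the cutting density is symmetric about $1/2$.

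Third, I would verify that $g$ satisfies its own Mellin transform condition. Since $g(t) = g(1-t)$, the admissible set $\mathcal{F}$ collapses to $\{g\}$, so the condition reduces to $\sum_{\ell\ne 0}|\mathcal{M}_g(1 - 2\pi i\ell/\log 10)|^{Nm} \to 0$. From $\mathcal{M}_g = \tfrac12(\mathcal{M}_f + \mathcal{M}_{f(1-t)})$, the triangle inequality, and the convexity of $x \mapsto x^{Nm}$ on $[0,\infty)$, I obtain the pointwise bound
\[
|\mathcal{M}_g(s)|^{Nm} \ \le\ \tfrac{1}{2}\bigl(|\mathcal{M}_f(s)|^{Nm} + |\mathcal{M}_{f(1-t)}(s)|^{Nm}\bigr).
\]
Summing over $\ell \ne 0$ and applying the hypothesis of Theorem~\ref{thm:unrestrictedmdim} to the two constant sequences $f_u \equiv f$ and $f_u \equiv f(1-t)$ drives both terms on the right to zero as $N\to\infty$. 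The main (and rather mild) obstacle is checking the multiset invariance: swapping the two children of any internal cut $u$ merely permutes the subtrees rooted at those children, and every descendant cut still acts on a piece of unchanged total size, so the leaves of the cut tree are merely relabeled, not changed. With this in hand the coupling and the Mellin transfer are both routine, and the lemma reduces Theorem~\ref{thm:unrestrictedmdim} to the symmetric case.
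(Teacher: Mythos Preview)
Your proposal is correct and follows the same symmetrization idea as the paper's proof: both observe that cutting with $p$ and cutting with $1-p$ produce the same unordered pair of pieces, so one may replace $f$ by its symmetrization $g(t)=(f(t)+f(1-t))/2$. Your treatment is considerably more thorough than the paper's two-line argument---in particular, you verify that $g$ inherits the Mellin condition via the convexity bound, a point the paper does not address at all.

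One small imprecision worth tightening: after swapping subtrees, the cut value sitting at a given tree \emph{position} is not $\tilde p_u$ but rather $\tilde p_{\sigma(u)}$ for a random permutation $\sigma$ that depends on the coins $\epsilon_v$ at ancestor nodes, so the sentence ``the $\tilde p_u$ remain iid, so the relabeled process is an instance of the $g$-process'' is not quite a complete justification. The cleanest repair is the recursive observation you are implicitly using anyway: the law of the multiset $\{V_i\}$ depends on $f$ only through the law of the unordered pair $\{P,1-P\}$, and that law is identical for $f$ and $g$.
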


\begin{proof}

Let $V_j$ be some arbitrary volume piece with $1 \ \leq \ j \ \leq \ 2^{Nm}$ that we cut with a density cut $p_l$, where $1 \ \leq \ l \ \leq \ 2^{Nm}-1$. Without loss of generality, we assume that $p_l  \ < \ 1/2$ and that we get two new volume pieces $V_jp_l$ and $V_j(1-p_l)$. Observe that, in terms of the fragmentation process, this is identical to cutting with a density cut $1-p_l$ because we would get the same pieces: $V_j(1-p_l)$ and $V_j(1-(1-p_l)) \ = \ V_jp_l$. Hence we may assume that $0 \ < \ p_l \ < \ 1/2$ for all possible choices of $l$. Furthermore, if we have some density $f(x)$, we can turn it into a density that is symmetric around $1/2$ by replacing it with a density $g(x) \ = \ \frac{f(x) + f(1-x)}{2}$.
\end{proof}
\noindent One of our goals is to think about all of the assumptions that are necessary for Benford behavior to emerge. As an example, we might wonder how reducing the number of new volume pieces might impact the convergence to Benford behavior. We conjecture that, if we decide to fragment only one piece at a time, we still converge to Benford behavior, which would be a direct generalization of Theorem $1.9.$ of~\cite{Beck}.

The next step would be to restrict ourselves to a fragmentation process where every piece is cut with exactly the same density cut $p$. Using the methods of this paper, one could extend Theorem $1.11.$ of~\cite{Beck} to make it work for an $m$-dimensional fragmentation process for almost all $p$.

Finally, it would be interesting to see what happens when we look at the behavior of the significand of lower-dimensional volumes. We present our conjecture about the Benford behavior of lower-dimensional volumes resulting from an unrestricted fragmentation process. In particular, we present our conjecture in the context of a rectangle due to the promising results of the simulations we have run.

\begin{conj}\label{conj:benfrectangle}
The perimeters of the sub-rectangles from a rectangle undergoing an unrestricted fragmentation process converges to Benford behavior.
\end{conj}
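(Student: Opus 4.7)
The plan is to extend the Fourier-analytic framework driving Theorem~\ref{thm:unrestrictedmdim} to the perimeter, the new difficulty being that the perimeter is \emph{additive} in the side lengths whereas the Mellin machinery is tailored to multiplicative structure.  For a generic sub-rectangle in the $m=2$ fragmentation the two side lengths $\ell_1,\ell_2$ are each a product of $N$ independent random factors (each of the form $p_i$ or $1-p_i$ with $p_i\sim f$) coming from the horizontal and vertical cuts on the branch ending at that sub-rectangle; because horizontal and vertical cut directions alternate by tree depth, the underlying factor sets are disjoint and so $\ell_1\perp\ell_2$.  Expanding $\varphi_s$ in its Fourier series on the log-circle, as in the proof of Theorem~\ref{thm:unrestrictedmdim}, gives
\bea
\mathbb{E}[P_N(s)]\ =\ \log_{10}s\ +\ \sum_{k\neq 0} c_k\,2^{it_k}\,\mathbb{E}\!\left[(\ell_1+\ell_2)^{it_k}\right],\qquad t_k:=\frac{2\pi k}{\log 10},
\eea
so item~\ref{expectation} reduces to showing that $\mathbb{E}[(\ell_1+\ell_2)^{it_k}]\to 0$ as $N\to\infty$, uniformly enough in $k$ to sum.

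The crux is to convert the additive object $(\ell_1+\ell_2)^{it}$ into a multiplicative one.  I would use the Mellin--Barnes representation
\bea
(x+y)^{it}\ =\ \frac{1}{2\pi i\,\Gamma(-it)}\int_{C}\Gamma(-it+z)\,\Gamma(-z)\,x^{z}\,y^{it-z}\,dz
\eea
along a vertical contour $C=\{\Re(z)=c\}$ with $c\in(-1,0)$ separating the pole families of the two Gamma factors.  Since $\ell_1\perp\ell_2$, swapping expectation and contour yields
\bea
\mathbb{E}[(\ell_1+\ell_2)^{it}]\ =\ \frac{1}{2\pi i\,\Gamma(-it)}\int_{C}\Gamma(-it+z)\Gamma(-z)\,\mathcal{M}_f(z+1)^{N}\,\mathcal{M}_f(it-z+1)^{N}\,dz,
\eea
which is exactly the sort of quantity the Mellin condition~\eqref{eqn:mellincondition} controls: on $C$ both $\mathcal{M}_f(z+1)$ and $\mathcal{M}_f(it-z+1)$ are evaluated near the critical line $\Re(s)=1$ where $|\mathcal{M}_f(1+is)|<1$ for $s\neq 0$ (by non-degeneracy of $f$), so each is geometrically small in $N$; combined with the exponential decay of the $\Gamma$ factors in the imaginary direction, the full contour integral is $o(1)$.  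A more intuitive but technically more delicate alternative is the hyperbolic decomposition
\bea
\log(\ell_1+\ell_2)\ =\ \log 2+\tfrac{1}{2}\log(\ell_1\ell_2)+\log\cosh\!\bigl(\tfrac{1}{2}\log(\ell_1/\ell_2)\bigr),
\eea
where the half-log-area is a sum of $2N$ i.i.d.\ log-cut factors whose characteristic function $|\mathcal{M}_f(1+it/2)|^{2N}\to 0$, and a CLT argument would make $\mu:=\tfrac{1}{2}\log(\ell_1\ell_2)$ and $\delta:=\tfrac{1}{2}\log(\ell_1/\ell_2)$ asymptotically independent so that the correction $\log\cosh\delta$ contributes only a bounded factor.

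The main obstacle I foresee is the variance bound $\lim_{N\to\infty}{\rm Var}(P_N(s))=0$, which reduces to joint estimates of the form $\mathbb{E}\!\left[(\ell_1^{(i)}+\ell_2^{(i)})^{it_k}\,\overline{(\ell_1^{(j)}+\ell_2^{(j)})^{it_{k'}}}\right]$ over pairs of sub-rectangles $(i,j)$.  I would stratify these pairs by the depth $d$ of their least common ancestor in the fragmentation tree: cuts above depth $d$ are shared between the two perimeters, cuts below are independent.  Applying the Mellin--Barnes identity to both perimeters produces a four-parameter contour integral whose integrand factors into shared-vs-non-shared Mellin products of the shape $\mathcal{M}_f(\cdot)^{d}\,\mathcal{M}_f(\cdot)^{N-d}$; combined with the exponential rarity of deeply-common pairs, this should close the estimate.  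The genuine difficulty, relative to the clean volume setting of Theorem~\ref{thm:unrestrictedmdim}, is that the nonlinearity $(x,y)\mapsto x+y$ couples the shared and non-shared cuts through the Barnes contours, so one must carry out a careful contour-shift argument uniformly avoiding all poles of the Gamma factors across the full range of $(d,k,k')$ appearing in the variance sum, and I expect most of the technical work to be concentrated there.
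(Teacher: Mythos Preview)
This statement is a \emph{conjecture} in the paper, and the paper does not prove it: Section~\ref{sec:conjecture} reports only that Mathematica simulations appear consistent with Benford behaviour and points to~\cite{Betti} for partial progress. There is thus no proof in the paper for your proposal to be compared against; what you have written is a research outline, and it contains a concrete obstruction.

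Your main device is the Mellin--Barnes representation of $(x+y)^{it}$. The standard identity
\[
(1+w)^{-a}\ =\ \frac{1}{2\pi i\,\Gamma(a)}\int_{c-i\infty}^{c+i\infty}\Gamma(a+z)\,\Gamma(-z)\,w^{z}\,dz
\]
is valid for $-\Re(a)<c<0$, which forces $\Re(a)>0$. With $a=-it$ purely imaginary the admissible strip is empty: the pole of $\Gamma(-z)$ at $z=0$ and the pole of $\Gamma(-it+z)$ at $z=it$ both lie on $\Re(z)=0$, so no vertical contour separates them. Worse, even after regularizing, on a line $\Re(z)=c\in(-1,0)$ you evaluate $\mathcal{M}_f(1+z)$ at $\Re=1+c<1$, and for the uniform density $\mathcal{M}_f(s)=1/s$ this gives $|\mathcal{M}_f(1+c+iy)|>1$ near $y=0$; the factor $\mathcal{M}_f(1+z)^{N}$ therefore \emph{grows} geometrically in $N$ rather than decays. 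The companion factor $\mathcal{M}_f(1+it-z)^{N}$ does decay on that line, but for small $|t|$ the product of the two moduli at $y=0$ still exceeds~$1$ (e.g.\ roughly $4/3$ at $c=-1/2$), so the ``geometrically small in $N$'' claim fails as stated. Your hyperbolic-decomposition alternative is more promising, but the step where you treat $\mu=\tfrac12\log(\ell_1\ell_2)$ and $\delta=\tfrac12\log(\ell_1/\ell_2)$ as independent is unjustified: sums and differences of i.i.d.\ variables are uncorrelated but not independent outside the Gaussian case, so $\mathbb{E}\bigl[e^{it\mu}e^{it\log\cosh\delta}\bigr]$ does not factor. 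Until that coupling is controlled, even item~\eqref{expectation} remains open---which is exactly why the paper leaves the statement as a conjecture.
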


More generally, we have the following.

\begin{conj}\label{conj:benfconjddimlessthanmdim}
A $d$-dimensional volume for an $m$-dimensional object, where $d \ < \ m$, undergoing a fragmentation process follows the Benford distribution for the appropriate set of conditions. 
\end{conj}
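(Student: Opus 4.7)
The plan is to port the Fourier/Mellin framework of Theorem~\ref{thm:unrestrictedmdim} to sub-volumes of intermediate dimension. In the unrestricted $m$-dimensional fragmentation, every sub-box is cut along each coordinate axis once per iteration, so after $N$ iterations the side length of a sub-box along axis $j$ is a product of exactly $N$ factors of the form $p_i$ or $1-p_i$, indexed by the axis-$j$ ancestor cuts on the binary tree path to that sub-box. A $d$-dimensional face spanned by, say, axes $1,\ldots,d$ therefore has $d$-volume equal to a product of $Nd$ such factors, giving the natural analogue of Equation~\eqref{eqn:volumespieces}.

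The expectation claim $\lim_{N \to \infty} \mathbb{E}[P_N(s)] = \log_{10} s$ follows by expanding the significand indicator $\varphi_s$ in a Fourier series in $\log_{10}$ and showing that every nonzero Fourier mode averages to zero. Via Lemma~\ref{rmk:steve'sideaforsymmetry} I may assume each $f_u$ is symmetric about $1/2$, and independence of the cuts factors the $\ell$-th Fourier mode of a single sub-box's $d$-volume as a product of $Nd$ Mellin transforms. The problem thus reduces to a $d$-dimensional Mellin condition
\bea
\lim_{N \to \infty} \sum_{\ell \neq 0} \left| \prod_{u=1}^{Nd} \mathcal{M}_{f_u}\!\left(1 - \frac{2\pi i \ell}{\log 10}\right) \right| \ = \ 0,
\eea
which I would bundle into the ``appropriate set of conditions'' demanded by the conjecture. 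Since each Mellin factor has modulus at most one with strict inequality at $\ell \neq 0$ for nondegenerate $f$, this $d$-dimensional condition is genuinely weaker than the $Nm$-product condition already hypothesized in Theorem~\ref{thm:unrestrictedmdim}.

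The main obstacle is the variance estimate $\mathrm{Var}(P_N(s)) \to 0$. Two distinct sub-boxes share ancestor cuts up to some last common ancestor at depth $d^* \in \{0, 1, \ldots, Nm - 1\}$; roughly $(d/m)d^*$ of the shared cuts act on the relevant axes and so contribute $\mathcal{M}_f(1) = 1$ rather than cancellation in the cross-Fourier expansion. Expanding $\mathbb{E}[\varphi_s(V_A^{(d)}) \varphi_s(V_B^{(d)})]$ into Fourier modes, the critical case is $\ell_1 = -\ell_2 \neq 0$, where only the unshared factors --- roughly $2d(N - d^*/m)$ of them --- deliver geometric Mellin decay. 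I would stratify pairs by $d^*$ (giving $O(2^{2Nm - d^*})$ pairs at each level) and sum, showing that the resulting bound of the shape $\sum_{d^*} 2^{-d^*} |\mathcal{M}_f|^{2d(N - d^*/m)}$ tends to zero for all $d \geq 1$; the case $d = m$ is exactly what Theorem~\ref{thm:unrestrictedmdim} handles, and the recalibration of exponents for $d < m$ is the principal technical work, with the delicate balance being to ensure the geometric $2^{-d^*}$ decay in the pair count dominates the loss of cancellation from shared relevant factors.

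The perimeter instance (Conjecture~\ref{conj:benfrectangle}) is the hardest case and demands a genuinely new idea, since a perimeter is a \emph{sum} of side lengths and Mellin techniques exploit multiplicative structure. A plausible route is to show that with high probability one side length dominates the others by a factor tending to infinity, so that $\log_{10}(\text{perimeter}) = \log_{10}(\max\text{ side}) + o(1)$, and then to invoke Benford behavior of the maximum side length via the one-dimensional result of~\cite{Beck}. Quantifying this domination under the uniform cut model is the chief analytic obstacle and is where I expect the bulk of the work to lie.
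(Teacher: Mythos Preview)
The statement you are addressing is labelled a \emph{conjecture} in the paper, and the paper gives no proof: Section~\ref{sec:conjecture} reports only simulation evidence and a pointer to~\cite{Betti} for partial progress. So there is no ``paper's proof'' to compare against; your proposal is an attempt to settle something the authors leave open.

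For the interpretation in which the $d$-dimensional volume of a sub-box means the volume of a \emph{single fixed $d$-face} (hence a product of $Nd$ cut factors), your plan is sound and is exactly the natural recalibration of the paper's proof of Theorem~\ref{thm:unrestrictedmdim}. The expectation argument goes through verbatim with $Nm$ replaced by $Nd$, and your variance stratification by last-common-ancestor depth $d^\ast$ is the right move. One bookkeeping point you should make precise: two sub-boxes diverging at an \emph{irrelevant} level (an axis not among $1,\dots,d$) still use independent cut variables at all subsequent relevant levels, because they sit in different branches of the tree; so the count of ``shared relevant factors'' is indeed $\approx (d/m)d^\ast$ and the remaining $\approx 2d(N-d^\ast/m)$ Mellin factors give the needed decay. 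Your sum $\sum_{d^\ast} 2^{-d^\ast}\,r^{2d(N-d^\ast/m)}$ does tend to zero for every $r\in(0,1)$, regardless of whether $2r^{2d/m}$ exceeds $1$, so this part should close.

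The gap is in the sum interpretation, which is almost certainly what the conjecture intends (Conjecture~\ref{conj:benfrectangle} is about perimeters, and the total $d$-content of a box is the elementary symmetric polynomial $e_d(L_1,\dots,L_m)$ up to a constant). Your ``dominant term'' heuristic is the right intuition, but the step ``invoke Benford behavior of the maximum side length via the one-dimensional result of~\cite{Beck}'' does not follow: that result gives Benfordness of a \emph{fixed} side length $L_j$, not of $\max_j L_j$. Conditioning on $L_J$ being the maximum biases the distribution of $\log L_J$, and you would need to show that this bias washes out modulo $1$. A workable route is to argue that the event $\{J=j\}$ depends on the large-scale (order-$\sqrt{N}$) behavior of the $\log L_k$ while the mod-$1$ residues depend on the fine scale, and that these decouple asymptotically; but this is a genuine additional argument, not a citation. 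The same issue recurs for the variance of the empirical perimeter distribution, where cross terms now involve correlated maxima over correlated side-length vectors, and the paper's tree-depth stratification no longer interacts cleanly with the $\max$ operation.
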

In this context, an appropriate set of conditions would be something akin to the Mellin transform condition we use throughout this paper. For more details on progress made in this field, read~\cite{Betti}.

In Section~\ref{sec:mathmethods} we present an overview of the key mathemathical methods used in this paper, after which we prove Theorem~\ref{thm:unrestrictedmdim} in Section~\ref{sec:proofofthm}.


\section{Mathemathical Methods used in the Analysis of Benford Behavior}\label{sec:mathmethods}

In this section, we present all of the relevant mathematical machinery necessary to prove our claims. Many of these theorems and definitions are taken from~\cite{GS,Mil,MT-B} and previous papers in the field such as~\cite{Ben,JKKKM,Jing}. For completeness we state some standard definitions. 




\begin{defi}[Probability Density Function (pdf)]\label{defi:pdf}
The probability density function (pdf), also referred to as the \tbf{density}, of a continuous random variable is a function that describes the probability distribution of the variable's values. A probability density function is non-negative, and integrates to one over the space.
\end{defi}

\begin{defi}[Cumulative Distribution Function (cdf)]\label{defi:cdf}
The cumulative distribution function (cdf) describes the probability that a real-valued random variable $\rm{X}$ with a given pdf is at most $x$. If $f(x)$ is the density of $\rm{X}$, then $\int_{-\infty}^{x}f(t)dt$ gives the cumulative distribution function. 
\end{defi}

\begin{defi}[Big-Oh Notation]\label{defi:bigoh}
By $A(x) \ = \ O(B(x))$, which we read as ``$A(x)$ is of order (or big-Oh) $B(x)$'', we mean that there exits some $C \ > \ 0$ and an $x_0$ such that $\forall x \ \geq \ x_0$, $\abs{A(x)} \ \leq \ C B(x)$. We also write this as $A(x) \ll B(x)$ or $B(x) \gg A(x)$.
\end{defi}

\begin{defi}[Little-Oh Notation]\label{defi:littleoh}
By $A(x) \ = \ o(B(x))$, which we read as ``$A(x)$ is little-Oh of $B(x)$'', we mean that $\lim_{x \to \infty} A(x)/B(x) \ = \ 0$.
\end{defi}


\begin{defi}[Equidistributed]\label{defi:equidistributed}
A sequence $\{x_n\}_{n=-\infty}^{\infty}$, $x_n \in [0,1]$, is equidistributed in $[0,1]$ if
\bea
\lim_{N \to \infty} \frac{\#\{n: \abs{n} \ \leq N, x_n \in [a,b]\}}{2N + 1} \ = \ \lim_{N \to \infty} \frac{\sum_{n=-N}^{N}\xi_{(a,b)}(x_n)}{2N+1}
\eea
for all $(a,b) \subset [0,1]$, where $\xi_{(a,b)}$ is the indicator function on the interval $[a,b]$.
\end{defi}
The following result is well known; see for example~\cite{Dia,MT-B}.
\begin{thm}\label{thm:equimod1benf}
If $y_n = \log_{B}{x_n}$ is equdistributed $\bmod \ 1$ then $x_n$ is Benford base $B$.
\end{thm}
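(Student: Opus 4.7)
The plan is to translate the Benford condition on the significand into a statement about the fractional part of $\log_B x_n$ and then read it off directly from the equidistribution hypothesis applied to a specific subinterval of $[0,1]$.

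First I would unpack the scientific-notation decomposition from Definition~\ref{def:significand}. Every nonzero $x_n$ can be written as $x_n = S_B(x_n)\, B^{k_n}$ with $S_B(x_n)\in[1,B)$ and $k_n\in\Z$. Taking $\log_B$ of both sides gives $y_n = \log_B x_n = \log_B S_B(x_n) + k_n$, and since $\log_B S_B(x_n)\in[0,1)$ this is precisely the decomposition of $y_n$ into fractional and integer parts. Thus $\{y_n\} = \log_B S_B(x_n)$, where $\{\cdot\}$ denotes the fractional part.

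Next I would recast the Benford event. Fix $s\in[1,B)$. Because $\log_B$ is strictly increasing on $[1,B)$,
\[
S_B(x_n) \le s \ \Longleftrightarrow\ \log_B S_B(x_n) \le \log_B s \ \Longleftrightarrow\ \{y_n\} \in [0, \log_B s].
\]
Hence the proportion of indices with $S_B(x_n)\le s$ equals the proportion of indices with $\{y_n\}\in[0,\log_B s]$.

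Finally I would invoke the hypothesis: by Definition~\ref{defi:equidistributed} the sequence $\{y_n\}$ is equidistributed in $[0,1]$, so for the interval $(a,b) = (0,\log_B s)$ the limiting frequency of visits equals its length $\log_B s - 0 = \log_B s$. Combined with the previous paragraph this gives
\[
\lim_{N\to\infty}\frac{\#\{n : |n|\le N,\ S_B(x_n)\le s\}}{2N+1} \ =\ \log_B s,
\]
which is exactly the strong Benford property of Definition~\ref{defi:benfstrong}. The statement about leading digits in Definition~\ref{def:benforiginal} then follows by taking differences $F_d = \log_B((d{+}1)/d)$.

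The only subtle points, and the ones I would be most careful about, are the negligible measure-zero boundary contributions (endpoints $\{y_n\}=0$ or $=\log_B s$, which do not affect the limit because singletons have measure zero in the equidistribution statement) and the implicit handling of the sign convention: equidistribution as defined here is two-sided in $n$, but the argument is symmetric in $n$ and goes through verbatim. There is no real obstacle — this is essentially a change-of-variables argument — but it is important to emphasize that the proof works for \emph{strong} Benford's law, not merely the weak form.
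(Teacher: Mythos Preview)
Your argument is correct and is precisely the standard proof: identify the fractional part of $\log_B x_n$ with $\log_B S_B(x_n)$ via the scientific-notation decomposition, so that the Benford event $S_B(x_n)\le s$ becomes $\{y_n\}\in[0,\log_B s]$, and then read off the limiting frequency from equidistribution. The paper itself does not give a proof of this theorem at all; it simply states the result as well known and cites~\cite{Dia,MT-B}, so there is nothing to compare against beyond noting that your write-up is exactly the classical argument those references contain.
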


\begin{thm}[Poisson Summation] \label{thm:poissum}
Let $f$, $f'$ and $f''$ be continuous functions which eventually decay at least as fast as $x^{-(1+\eta)}$ for some $\eta > 0$. Then
\bea \label{eqn:poissum}
\sum_{n=-\infty}^{+\infty} f(n) & \ = \ & \sum_{n=-\infty}^{+\infty} \hat{f}(n),
\eea
where $\hat{f}(y) \ = \ \int_{-\infty}^{+\infty}f(x)e^{-2\pi ixy}dx$ is the Fourier transformation of $f$.
\end{thm}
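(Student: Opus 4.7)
The plan is to prove the Poisson summation formula via the classical periodization argument. First I would define the 1-periodic function
\[
F(x) \ := \ \sum_{n=-\infty}^{\infty} f(x+n).
\]
The decay hypothesis $|f(x)| \ll (1+|x|)^{-(1+\eta)}$ makes this series absolutely and uniformly convergent on every bounded interval (compare to $\sum n^{-(1+\eta)}$), so $F$ is continuous; the analogous decay of $f'$ and $f''$ lets us differentiate termwise twice, so $F \in C^2(\mathbb{R}/\mathbb{Z})$.

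Next I would expand $F$ in its Fourier series. Because $F$ is $C^2$ on the circle, its Fourier coefficients $\widehat{F}(k)$ decay at least like $1/k^2$, so the Fourier series of $F$ converges absolutely and uniformly to $F$ everywhere; in particular it converges at $x = 0$. The plan is to compute the coefficient $\widehat{F}(k) = \int_0^1 F(x) e^{-2\pi i k x}\,dx$ by interchanging the sum and the integral (justified by uniform convergence), then folding the interval $[0,1]$ translated by each $n$ back together:
\[
\widehat{F}(k) \ = \ \sum_{n=-\infty}^{\infty} \int_0^1 f(x+n) e^{-2\pi i k x}\,dx \ = \ \sum_{n=-\infty}^{\infty} \int_n^{n+1} f(y) e^{-2\pi i k y}\,dy \ = \ \hat{f}(k),
\]
where I used $e^{-2\pi i k n} = 1$ after the change of variables $y = x + n$.

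Putting the two sides together, the identity $F(0) = \sum_k \widehat{F}(k) e^{2\pi i k \cdot 0}$ gives
\[
\sum_{n=-\infty}^{\infty} f(n) \ = \ F(0) \ = \ \sum_{k=-\infty}^{\infty} \hat{f}(k),
\]
which is \eqref{eqn:poissum}.

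The main obstacle is the justification of pointwise convergence of $F$'s Fourier series at $x = 0$, since the formula is a pointwise equality rather than an $L^2$ statement. This is precisely where the hypotheses on $f'$ and $f''$ are used: integration by parts twice in the definition of $\hat{f}(k)$, combined with the decay of $f$, $f'$, $f''$ at infinity, yields $|\hat{f}(k)| \ll 1/k^2$, and the same bound applies to $\widehat{F}(k) = \hat{f}(k)$. This decay rate makes the Fourier inversion sum $\sum_k \widehat{F}(k) e^{2\pi i k x}$ absolutely convergent, legitimizing the evaluation at $x=0$. Every other step is a direct Fubini or change-of-variables argument controlled by the same absolute convergence bounds.
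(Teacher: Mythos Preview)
Your argument is the classical periodization proof of Poisson summation and is correct as written: the decay hypotheses on $f,f',f''$ justify the uniform convergence of the periodized series and its termwise differentiation, the $C^2$ regularity gives $\widehat{F}(k)=O(k^{-2})$ and hence absolute convergence of the Fourier series at $x=0$, and the unfolding computation $\widehat{F}(k)=\hat f(k)$ is straightforward.

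Note, however, that the paper does \emph{not} give its own proof of this theorem. Poisson summation is listed in Section~\ref{sec:mathmethods} among the ``relevant mathematical machinery'' imported from standard references (the paper cites \cite{GS,Mil,MT-B} and related sources at the start of that section), and the statement appears without proof. So there is no paper-side argument to compare against; your proposal simply supplies the standard textbook proof that the paper takes for granted.
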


\begin{defi}[Standard Convolution]\label{defi:convolution}
The additive \tbf{convolution} of independent continuous random variables $X$ and $Y$ on $\R$ with densities $f_X$ and $f_Y$ is denoted $f_X*f_Y$, and is given by
\bea\label{eqn:convolutioncont}
(f_X*f_Y)(z) \ \coloneqq \ \int_{-\infty}^{+\infty}f_X(t)f_Y(z-t)\dd t.
\eea
If $X$ and $Y$ are discrete, we have
\bea\label{eqn:convolutiondis}
(f_X*f_Y)(z) \ \coloneqq \ \sum_{n}f_X(t)f_Y(z-t);
\eea
note, of course, that $f_Y(z-x_n)$ is zero unless $z-x_n$ is one of the values where $Y$ has positive probability. 
\end{defi}

\begin{thm}[Convolutions and the Fourier Transform]\label{thm:fourierconvolution}
Let $f,g$ be continuous functions on $\R$. If $\int_{-\infty}^{+\infty}\abs{f(x)}^2$ and $\int_{-\infty}^{+\infty}\abs{g(x)}^2$ are finite, then $h \ = \ f*g$ exists, and $\hat{h}(y) \ = \ \hat{f}(y)\hat{g}(y)$.
\end{thm}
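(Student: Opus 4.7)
The plan is to first verify that the convolution $h = f*g$ is pointwise well-defined under the $L^2$ hypothesis, and then compute $\hat{h}(y)$ by writing out the definition of the Fourier transform given in Theorem~\ref{thm:poissum}, substituting the convolution from Definition~\ref{defi:convolution}, and interchanging the order of integration.

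For existence of the convolution, I would apply the Cauchy--Schwarz inequality directly to the defining integral: for each fixed $z \in \R$,
\[
|(f*g)(z)| \ \leq \ \int_{-\infty}^{\infty} |f(t)|\,|g(z-t)|\,\dd t \ \leq \ \left(\int_{-\infty}^{\infty}\! |f(t)|^2\,\dd t\right)^{1/2}\!\left(\int_{-\infty}^{\infty}\! |g(z-t)|^2\,\dd t\right)^{1/2} \ = \ \|f\|_2\, \|g\|_2,
\]
where the last equality uses translation invariance of Lebesgue measure. This shows $h(z)$ exists for every $z$ and is uniformly bounded; continuity of $h$ then follows from the standard continuity-of-translation property in $L^2$. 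For the Fourier identity, I would compute
\[
\hat{h}(y) \ = \ \int_{-\infty}^{\infty} \left( \int_{-\infty}^{\infty} f(t)\, g(z-t)\,\dd t \right) e^{-2\pi i z y}\,\dd z \ = \ \int_{-\infty}^{\infty} f(t) \left( \int_{-\infty}^{\infty} g(z-t)\, e^{-2\pi i z y}\,\dd z \right) \dd t,
\]
and then substitute $u = z-t$ in the inner integral. This substitution factors $e^{-2\pi i t y}$ outside the inner integral and produces exactly $\hat{g}(y)$, so the outer integral becomes $\hat{f}(y)$, yielding $\hat{h}(y) = \hat{f}(y)\hat{g}(y)$.

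The main obstacle is rigorously justifying the interchange of integration, since under only the $L^2$ hypothesis the joint integrand $f(t)g(z-t)e^{-2\pi i z y}$ need not lie in $L^1(\R^2)$, so Fubini does not apply verbatim. The standard workaround is a density argument: approximate $f$ and $g$ by compactly supported continuous functions $f_n, g_n$, which automatically lie in $L^1 \cap L^2$ and for which Fubini applies directly, verify the convolution identity at this level, and pass to the limit as $n \to \infty$. The left-hand side is continuous in $f$ and $g$ by the Cauchy--Schwarz bound above, and the right-hand side is continuous by the $L^2$-isometry property of the Fourier transform (Plancherel). If instead one reads the hypotheses of the theorem as implicitly carrying enough decay to place $f, g \in L^1 \cap L^2$ (which is the setting in which this result is invoked later in the paper), then Fubini applies immediately and no approximation is required.
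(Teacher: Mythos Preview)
The paper does not actually supply a proof of Theorem~\ref{thm:fourierconvolution}; it is listed in Section~\ref{sec:mathmethods} as background machinery imported from standard references such as~\cite{GS,Mil,MT-B}, alongside Poisson summation and the definitions of convolution. So there is nothing in the paper to compare your argument against.

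Your argument itself is the standard one and is correct. The Cauchy--Schwarz bound cleanly handles pointwise existence and uniform boundedness of $h$, and your formal computation of $\hat h$ via the substitution $u=z-t$ is exactly how this identity is derived. Your caveat about Fubini is well taken: with only $f,g\in L^2$, the double integral need not be absolutely convergent, and the density-plus-Plancherel workaround you sketch is the right way to close that gap. In the context of this paper the functions in play are densities on $(0,1)$ (hence in $L^1\cap L^2$), so as you note, the direct Fubini route already suffices for every application here.
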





In our case, it is more useful to think about a modified definition of the convolution which is more applicable to products. 

\begin{defi}[Convolution]\label{defi:convolution}
The multiplicative convolution of two functions $f$ and $g$ is
\bea\label{eqn:convolution}
(f*g)(x) \ \coloneqq \ \int^{\infty}_{0}g\left(\frac{x}{t}\right)f(t) \frac{dt}{t} \ = \ \int^{\infty}_{0}f\left(\frac{x}{t}\right)g(t) \frac{dt}{t}.\nonumber
\eea
\end{defi}

The power of Definition~\ref{defi:benfstrong} is that it enables us to encode a lot of information about the distribution by just looking at the significand. This, along with some conditions explored in the work of Jang et al. in~\cite{JKKKM}, give us the tools we need to attack this problem.

Throughout this paper, we denote the set of volume pieces stemming from the fragmentation processes as $\{V_i\}$. Hence, we define the volume proportion. 

\begin{defi}[Volume Proportions, $P_N$] \label{defi:volumeproportions}
Given a collection of $m$-dimensional volumes $\{V_i\}$, the proportion whose significand is at most $s$, $P_N(s)$, is
\bea
P_N(s)\ :=\ \frac{\sum\limits_{\textnormal{over all} \ i} \varphi_s(V_i)}{\#\{V_i\}},
\eea
where $\#\{V_i\}$ denotes the size of our set of volumes (i.e., the number of volume pieces we are working with) and $N$ denotes how far along we are in the process.
\end{defi}

We also crucially appeal to Theorem $1.1$ and Remark $2.3$ of~\cite{JKKKM} which, for the reader's convenience, we restate below as Theorem~\ref{thm:jkkkmmain}.

\begin{thm}[Convergence to Benford for ``nice'' functions]\label{thm:jkkkmmain} 
Let $\{\mathcal{D}_i(\theta)\}_{i\in I}$ be a collection of one-parameter distributions with associated densities $f_{\mathcal{D}_i(\theta)}$ which vanish outside of $[0,\infty)$. Let $p:\N \to I$, $X_1 \sim \mathcal{D}_{p(1)}(1)$, $X_m \sim \mathcal{D}_{p(m)}(X_{m-1})$, and assume \ben 
\item for each $m \ge 2$, 
\bea\label{eq:keydensitylink} 
f_m(x_m) \ = \ \int_0^\infty f_{\mathcal{D}_{p(m)}(1)}\left(\frac{x_m}{x_{m-1}}\right) f_{m-1}(x_{m-1}) \frac{dx_{m-1}}{x_{m-1}} 
\eea where $f_m$ is the density of the random variable $X_m$, and 

\item we have 
\be\label{eq:summeltransformsnozero} 
\lim_{n\to\infty} \sum_{\ell = -\infty \atop \ell \neq 0}^\infty \prod_{m=1}^n (\mathcal{M} f_{\mathcal{D}_{p(m)}(1)})\left(1-\frac{2\pi i \ell}{\log B}\right) \ = \ 0. 
\ee 
\een 
Then as $n\to\infty$ the distribution of leading digits of $X_n$ tends to Benford's law. Further, the error is a nice function of the Mellin transforms. Explicitly, if $Y_n = \log_B X_n$, then 
\bea & & 
\left|{\rm Prob}(Y_n \bmod 1 \in [a,b]) - (b-a)\right| \nonumber\\ 
& & \ \ \ \ \ \ \ \le \ (b-a) \cdot \left|\sum_{\ell = -\infty \atop \ell \neq 0}^\infty \prod_{m=1}^n (\mathcal{M} f_{\mathcal{D}_{p(m)}(1)})\left(1-\frac{2\pi i \ell}{\log B}\right)\right|. \eea 
If $I$ is finite and all densities are continuous, then the second condition holds.
\end{thm}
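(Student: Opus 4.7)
The plan is to reduce the theorem to equidistribution modulo $1$ of $Y_n = \log_B X_n$ via Theorem~\ref{thm:equimod1benf}, and then execute a Fourier/Mellin analysis that exploits the multiplicative convolution structure built into hypothesis~\eqref{eq:keydensitylink}.

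First I would pass to logarithms. Let $g_n$ denote the density of $Y_n = \log_B X_n$ on $\R$, obtained from $f_n$ by the substitution $x = B^y$, so that $g_n(y) = (\log B)\, B^y f_n(B^y)$. The density of $Y_n \bmod 1$ on $[0,1)$ is the $\Z$-periodization $\tilde g_n(y) = \sum_{k\in\Z} g_n(y+k)$, and the periodization identity underlying Poisson summation (Theorem~\ref{thm:poissum}) shows that its $\ell$-th Fourier coefficient equals $\widehat{g_n}(\ell)$. A direct change of variables back from $y$ to $x$ identifies this coefficient with the Mellin transform of $f_n$ on the vertical line $\mathrm{Re}(s)=1$, namely
\bea
\widehat{g_n}(\ell) \ = \ \mathcal{M} f_n\!\left(1 - \frac{2\pi i \ell}{\log B}\right). \nonumber
\eea

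Next I would iterate the recursion~\eqref{eq:keydensitylink}, which says precisely that $f_m$ is the multiplicative convolution (Definition~\ref{defi:convolution}) of $f_{m-1}$ with $f_{\mathcal{D}_{p(m)}(1)}$. Since the Mellin transform sends multiplicative convolution to ordinary multiplication, the factors telescope to
\bea
\mathcal{M} f_n\!\left(1 - \frac{2\pi i \ell}{\log B}\right) \ = \ \prod_{m=1}^{n} \mathcal{M} f_{\mathcal{D}_{p(m)}(1)}\!\left(1 - \frac{2\pi i \ell}{\log B}\right). \nonumber
\eea
At $\ell = 0$ every factor equals $1$ because each $f_{\mathcal{D}_{p(m)}(1)}$ is a probability density, contributing the uniform background term. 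Substituting this expansion into Fourier inversion and integrating over $[a,b]\subset[0,1]$ yields
\bea
\left| \mathrm{Prob}(Y_n \bmod 1 \in [a,b]) - (b-a) \right| \ = \ \left| \sum_{\ell \neq 0} \widehat{\tilde g_n}(\ell) \int_a^b e^{2\pi i \ell y}\, dy \right|, \nonumber
\eea
and bounding the inner integral trivially by $b-a$ recovers the quantitative estimate in the statement. Hypothesis~\eqref{eq:summeltransformsnozero} then forces the upper bound to zero, giving equidistribution modulo $1$ and hence Benford behavior base $B$ through Theorem~\ref{thm:equimod1benf}.

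For the final assertion that finite $I$ together with continuity of all densities forces~\eqref{eq:summeltransformsnozero}, I would argue that each $\bigl|\mathcal{M} f_{\mathcal{D}_i(1)}\!\left(1 - 2\pi i \ell/\log B\right)\bigr|$ is strictly less than $1$ for every $\ell \neq 0$ (non-degeneracy of a genuine continuous density on $(0,\infty)$) and decays in $|\ell|$ at a rate inherited from continuity (a Riemann--Lebesgue type input on the log-scale). Finiteness of $I$ upgrades these pointwise bounds to a uniform-in-$i$, sub-unit bound, so the product over $m$ decays geometrically and the $\ell$-series collapses as $n\to\infty$. The main obstacle of the whole argument lies precisely here: securing the uniform-in-$i$, uniform-in-$\ell\ne 0$ strict bound below $1$ together with enough $|\ell|$-decay to make the series integrable, since without strict separation from $1$ one only knows each Mellin factor has modulus at most $1$ and cannot conclude that the infinite product vanishes.
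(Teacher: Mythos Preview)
The paper does not itself prove this theorem; it is quoted from~\cite{JKKKM} (their Theorem~1.1 and Remark~2.3) purely as a tool, so there is no in-paper proof to compare against. Your outline is precisely the standard argument behind that result---pass to $Y_n=\log_B X_n$, identify the Fourier coefficients of the $\Z$-periodized density with $\mathcal{M}f_n$ on the line $\mathrm{Re}(s)=1$, factor via the multiplicative-convolution recursion~\eqref{eq:keydensitylink}, isolate the $\ell=0$ term as $b-a$, and bound the rest---and it is correct in shape. The only places you leave work undone are the regularity checks (decay of $g_n$ and its derivatives, or equivalently enough smoothness to justify Poisson summation and termwise integration of the Fourier series), and, as you honestly flag, the uniform strict-below-$1$ bound plus $|\ell|$-decay needed for the finite-$I$ clause; both are exactly the technical content of~\cite{JKKKM}.
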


We use a slightly different version of this theorem which can be found in~\cite{BeckA}, but it can also be readily rederived using the tools we have presented in this section. For the reader's convinience, we present it below. 

\begin{thm}[Adaptation of Theorem 1.1. from~\cite{JKKKM}]\label{thm:productisbenfofnindep}
Let $K_1, \dots, K_N$ be independent random variables with densities $f_{k}$. Assume 
\bea\label{eqn:mellintransappendix}
\lim_{N\to\infty} \sum_{\ell = -\infty \atop \ell \neq 0}^\infty \prod_{k=1}^N \abs{\mathcal{M}f_{k}\left(1-\frac{2\pi i \ell}{\log 10}\right)} \ = \ 0.
\eea
Then as $N \to \infty$, $x_N \ = \ K_1\cdots K_N$ converges to Benford's Law. In particular, if $y_N \ = \ \log_{B} x_N$ then 
\bea & & \label{eqn:boundcondition}
\left|{\rm Prob}(y_n \bmod 1 \in [a,b]) - (b-a)\right| \nonumber\\ 
& & \ \ \ \ \ \ \ \le \ (b-a) \cdot \abs{\sum_{\ell = -\infty \atop \ell \neq 0}^\infty \prod_{k=1}^N \mathcal{M} f_{k}\left(1-\frac{2\pi i \ell}{\log B}\right)}. \eea 
\end{thm}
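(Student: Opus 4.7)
The plan is to reduce the statement to the equidistribution modulo $1$ of $Y_N := \log_B x_N = \sum_{k=1}^N \log_B K_k$, and then analyze $Y_N \bmod 1$ via Fourier analysis on the circle. By Theorem~\ref{thm:equimod1benf}, $x_N$ being Benford base $B$ is equivalent to $Y_N \bmod 1$ becoming equidistributed on $[0,1)$, so once I produce a quantitative bound on the discrepancy in terms of the Mellin-transform sum, the conclusion follows directly from the hypothesis.

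First I would establish the key identification between the Fourier transform of the density of $\log_B K_k$ and a Mellin transform of $f_k$. Letting $g_k$ denote the density of $\log_B K_k$, the change of variables $t = B^y$ gives
\[
\widehat{g_k}(\xi) \ = \ \int_{-\infty}^{\infty} g_k(y)\, e^{-2\pi i y \xi}\, dy \ = \ \int_0^\infty f_k(t)\, t^{-2\pi i \xi/\log B}\, dt \ = \ \mathcal{M} f_k\!\left(1-\frac{2\pi i \xi}{\log B}\right),
\]
by Definition~\ref{defi:mellintransform}. Since the $K_k$ are independent, so are the $\log_B K_k$, and the density $h_N$ of $Y_N$ is the additive convolution $g_1 * \cdots * g_N$. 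Theorem~\ref{thm:fourierconvolution} then gives $\widehat{h_N}(\xi) = \prod_{k=1}^N \mathcal{M} f_k(1 - 2\pi i \xi/\log B)$; in particular $\widehat{h_N}(0) = 1$, since $\mathcal{M} f_k(1) = \int_0^\infty f_k(t)\,dt = 1$.

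Next I would apply the Poisson summation formula (Theorem~\ref{thm:poissum}) to the periodization $\tilde h_N(y) := \sum_{n \in \Z} h_N(y+n)$, which is the density of $Y_N \bmod 1$ on $[0,1)$; this yields $\tilde h_N(y) = \sum_{\ell \in \Z} \widehat{h_N}(\ell)\, e^{2\pi i \ell y}$. Isolating the $\ell=0$ term and integrating against the indicator of $[a,b]$ gives
\[
{\rm Prob}(Y_N \bmod 1 \in [a,b]) - (b-a) \ = \ \sum_{\ell \neq 0} \widehat{h_N}(\ell) \int_a^b e^{2\pi i \ell y}\, dy.
\]
Taking absolute values and using $\left|\int_a^b e^{2\pi i \ell y}\, dy\right| \le b-a$ produces the discrepancy estimate \eqref{eqn:boundcondition}, and the hypothesis \eqref{eqn:mellintransappendix} (which dominates the stated bound via $|\sum|\le\sum|\cdot|$) drives the right-hand side to $0$ as $N \to \infty$.

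The main technical obstacle lies in justifying the Poisson summation step: Theorem~\ref{thm:poissum} demands polynomial decay of $h_N$ together with two derivatives, which is not a priori guaranteed by the hypothesis on the $f_k$. The standard remedy is a mollification argument: convolve $h_N$ with a smooth, compactly supported approximate identity $\psi_\epsilon$, apply Poisson summation to the mollified density (where the decay hypotheses are automatic), and then pass to the limit $\epsilon \to 0$ using the uniform summability of $\widehat{h_N}(\ell)\widehat{\psi_\epsilon}(\ell)$ provided by \eqref{eqn:mellintransappendix} together with $|\widehat{\psi_\epsilon}| \le 1$. Alternatively, the Mellin-decay hypothesis itself forces $\widehat{h_N}$ to be absolutely summable on $\Z$ for large $N$, which by Fourier inversion on the circle makes the series representation of $\tilde h_N$ legitimate without invoking Poisson summation in its classical form. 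Once this regularity point is handled, the rest is direct Fourier bookkeeping.
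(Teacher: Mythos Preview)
Your proposal is correct and follows precisely the route the paper indicates: the paper does not write out a self-contained proof of Theorem~\ref{thm:productisbenfofnindep} but defers to~\cite{BeckA} and remarks that it ``can also be readily rederived using the tools we have presented in this section,'' namely the Mellin transform identification, the convolution/Fourier relation (Theorem~\ref{thm:fourierconvolution}), Poisson summation (Theorem~\ref{thm:poissum}), and Theorem~\ref{thm:equimod1benf}. Your argument assembles exactly these ingredients in the expected way, and your attention to the regularity hypothesis needed for Poisson summation (handled either by mollification or by direct Fourier inversion on the circle using the absolute summability from~\eqref{eqn:mellintransappendix}) is a point the paper leaves implicit.
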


Even though Theorem~\ref{thm:jkkkmmain} is fairly powerful with a relatively weak condition satisfied by most functions, there still are some possible subtle issues when it comes to convergence which is why we will often appeal to Lemma~\ref{rmk:steve'sideaforsymmetry}.


\section{Proof of Theorem~\ref{thm:unrestrictedmdim}}\label{sec:proofofthm}
In this section, we closely follow the proof from~\cite{Beck} for the $1$-dimensional case, while making the appropriate adjustments along the way in order to complete the generalization for the $m$-dimensional case.
We first present a concrete example for the first iteration of the fragmentation of a $3$-dimensional box, after which we extend our ideas to an $m$-dimensional box.

In Figure~\ref{fig:cubeabc} we show the cube of initial volume $V \ = \ abc$ and then in Figure~\ref{fig:3dfragmentationunrestricted} we show the fragmentation process for the first iteration. 

\begin{figure}[h]
    \centering
    \tikzset{every picture/.style={line width=0.75pt}} 

\begin{tikzpicture}[x=0.75pt,y=0.75pt,yscale=-1,xscale=1]

\draw  [color={rgb, 255:red, 74; green, 144; blue, 226 }  ,draw opacity=1 ][fill={rgb, 255:red, 80; green, 227; blue, 194 }  ,fill opacity=1 ] (122,104.2) -- (165.2,61) -- (375.8,61) -- (375.8,161.8) -- (332.6,205) -- (122,205) -- cycle ; \draw  [color={rgb, 255:red, 74; green, 144; blue, 226 }  ,draw opacity=1 ] (375.8,61) -- (332.6,104.2) -- (122,104.2) ; \draw  [color={rgb, 255:red, 74; green, 144; blue, 226 }  ,draw opacity=1 ] (332.6,104.2) -- (332.6,205) ;

\draw (213,214.4) node [anchor=north west][inner sep=0.75pt]    {$a$};
\draw (102,142.4) node [anchor=north west][inner sep=0.75pt]    {$b$};
\draw (364,180.4) node [anchor=north west][inner sep=0.75pt]    {$c$};

\end{tikzpicture}
    \caption{The box we are fragmenting.}
    \label{fig:cubeabc}
\end{figure}
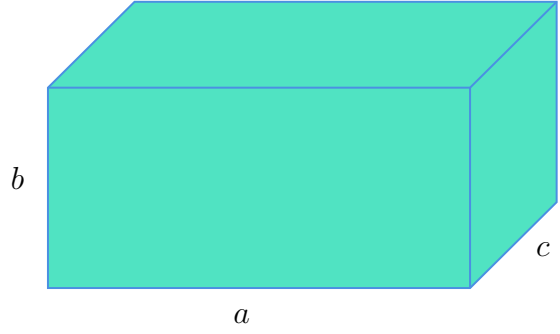

\begin{figure}[h]
    \centering
    \scalebox{1.2}{\includegraphics{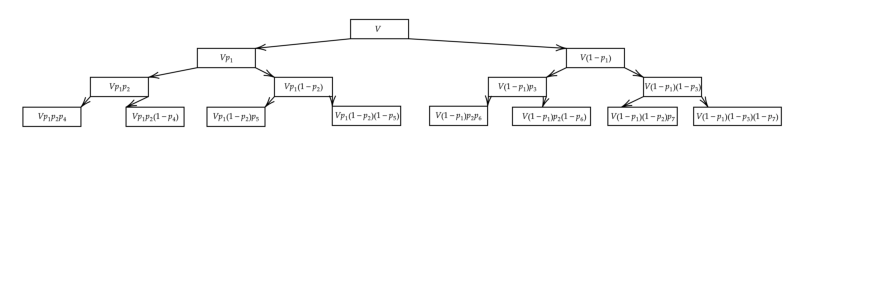}}
    \vspace{-40mm}
    \caption{Fragmentation process for $m \ = \ 3$ and $N \ = \ 1$.}
    \label{fig:3dfragmentationunrestricted}
\end{figure}

Focus now on Figure~\ref{fig:3dfragmentationunrestricted} which represents the fragmentation process. In the first row, we are presenting the initial volume $V \ = \ abc$. We use a density cut $p_1$, taken to be between $0$ and $1$, to cut the box along one of its axes. Without loss of generality, assume we cut the box with a plane vertically (hence, parallel to $b$). Then, we take the leftover pieces, represented in row $2$, and we cut one of those pieces using the density cut $p_2$ and the other one with $p_3$, which, without loss of generality, represent a horizontal cut (parallel to side $a$). The resulting pieces and their volumes are represented in row $3$. Finally, we cut along the last possible direction, with a plane parallel to side $c$ using density cuts $p_4$, $p_5$, $p_6$ and $p_7$. The pieces that result from this process are shown in row $4$ of Figure~\ref{fig:3dfragmentationunrestricted}, and we say we have gone through a single iteration of the process when all of the cuts have been made. In the next iteration, we continue doing the exact same process, so we have $(2^3)^N$ pieces after $N$ iterations. The key insight here is that it doesn't matter in what order we choose to perform the cuts: the only relevant detail, for the way we are constructing this iteration process, is that we cut along each coordinate direction exactly once per iteration. 

\begin{rmk}\label{rmk:whywecutthisway}
We are constructing the process in this way as it leads to a cleaner proof. While we could, for example, assign a particular density cut to every row of the process, we would still get Benford behavior, except in some potential pathological cases, though the algebra would be more complicated. We could even use the same density cut for every piece, and still get Benford behavior as long as $p \  \neq 0.5$. For more details about the machinery of such a process, see the proof of Theorem 1.11 in~\cite{Beck}.
\end{rmk}
After $N$ iterations, we label the pieces as \bea\label{eqn:Nthiterationpiecesform=3}
V_1 &\ =\ & Vp_1p_2p_4\cdots p_{2^{3N -2}} \cdot p_{2^{3N - 1}} \ \textnormal{which is the leftmost piece} \nonumber \\
V_2 &\ =\ & Vp_1p_2p_4\cdots p_{2^{3N -2}} \cdot (1-p_{2^{3N - 1}}) \\
\vdots \nonumber \\
V_{8^N} &\ =\ & V(1-p_1)(1-p_3)(1-p_7)\cdots (1-p_{2^{3N -1}-1}) \cdot (1- p_{2^{3N}-1}),
\eea
where $V_{8^N}$ represents the rightmost volume piece. We use this case as a springboard towards the generalized $m$-dimensional case.

Denote all the coordinates of the $m$-dimensional space as $x_1, x_2, \dots, x_m$. The first step of our first iteration is to cut along the $x_1$ coordinate with some sort of a proportion $p_{(x_1)_{1}} \ = \ p_1$, where $p_{(x_1)_{1}}$ denotes the cut along the $x_1$ coordinate performed during the first iteration and applying it to the ``leftmost'' volume piece in the first iteration. Going back to Figure~\ref{fig:3dfragmentationunrestricted}, this means we cut our original volume $V$ with the density cut $p_1$. This results in two pieces $Vp_1$ and $V(1-p_1)$ and we are done cutting along the $x_1$ direction. We then ``move into the next row'', so this will be the second row in Figure~\ref{fig:3dfragmentationunrestricted} if we denote the starting condition as the first row, and we cut the pieces from left to right along the $x_2$ direction. Again, in our particular case, this means the piece $Vp_1$ is cut with a density cut $p_2$ along the $x_2$ axis, whereas the $V(1-p_1)$ piece is cut with a density cut $p_3$ along the same $x_2$ axis and so on. 
Each density cut $p_i$ is taken to be the value of the random variable $P$ stemming from a probability density function $f(t)$, where $t$ is a dummy variable. This function can be any density function that satisfies the Mellin Transform condition stated in Theorem~\ref{thm:unrestrictedmdim}, which turns out to not be a strong restriction since many functions satisfy it (see~\cite{JKKKM} for details and Example $2.4$ in~\cite{MN1} for a situation where the product is non-Benford). In order to keep the calculations simple, we restrict ourselves to the uniform density on $(0,1)$ while also making comments about the general case along the way. Again, Lemma~\ref{rmk:steve'sideaforsymmetry} makes generalizing the case much easier through appealing to the idea of symmetry around $1/2$.

We keep performing the cuts along every coordinate until we reach the $x_m$ coordinate and we cut along that coordinate using the proportion $p_{(x_m)_{1}} \ = \ p_{2^{m}-1}$. 

After we have completed cutting along the $m$-th coordinate, we say that we have completed a single iteration and we have a total of $(2)^m$ new pieces with particular values of the $m$-dimensional volumes. Again, appealing to Figure~\ref{fig:3dfragmentationunrestricted}, after cutting along the $x_3$ direction, we completed a single iteration and were left with $2^{3} \ = \ 8$ new volume pieces. For the second iteration, we repeat this entire process: we cut along the $x_1$ coordinate using a cut proportion $p_{2^{m}}$ and continue the process. We observe that, after $N$ iterations, we have $2^{Nm}-1$ cutting proportions using this numbering system which is reflected in our definitions of the variables used in the statement of Theorem~\ref{thm:unrestrictedmdim}. Now, we begin our proof of Theorem~\ref{thm:unrestrictedmdim} by first looking at the expectation.

\subsection{Expected value}\label{subsec:expectationunresmdim}

When calculating the expected value, the key insight is to observe that, effectively, we are finding the ``average'' 
of the significand indicator function: we are summing over $(2^m)^N$ terms and then dividing by this exact same number of terms: 

\bea\label{eqn:expectationvaluemdimfrag}
\mathbb{E}[P_N(s)] \ = \  \mathbb{E}\left[ \frac{ \sum _{i = 1} ^{(2^m)^N} \varphi_s(V_i)}{(2^m)^N} \right] \ = \  \frac{1}{(2^m)^N} \sum_{i = 1} ^{(2^m)^N} \mathbb{E} [\varphi_s(V_i)]. 
\eea
The tricky part is to figure out how to deal with this $V_i$, but the beautiful part is that we roughly know the nature of $V_i$. Essentially, this $V_i$ will always be a product of a certain $k$ number of factors $p_i$ taken from a distribution $f(t)$ satisfying the Mellin condition of Equation~\eqref{eqn:mellincondition}, whereas it will also have a total of $Nm - k$ proportions of the form $(1-p_i)$ which are taken form a distribution of the form $f(1-t)$ and then rescaled by some initial value $V$. Note that $t$ in this case is playing the role of a dummy variable: we are taking a value between $(0,1)$ for the proportion along every different coordinate for every volume piece. However, for our current purposes, we don't need to worry about this too much. Taking all of this into account, and relabeling when necessary, we conclude that $V_i$ can be expressed as
\bea\label{eqn:V_i}
    V_i \ = \  V \prod_{r = 1}^{k} p_r \prod_{j=k+1}^{Nm} (1-p_j).
\eea
By construction, Equation~\eqref{eqn:V_i} represents the product of independent random variables. While there are dependencies amongst the different volume pieces, the density cuts are chosen in such a way that they are independent of each other, which is important because it allows us to use the machinery of Theorem~\ref{thm:jkkkmmain}.

Using the definition of the expected value, we calculate the expectation of the significand indicator function by multiplying the significand indicator function of the $m$-dimensional volume $V_i$ with the associated probability density function and then integrating accordingly. Again, from the independence of the density cuts, we conclude that the probability of that particular volume piece $V_i$ occurring is equal to the product of all the density cuts that were involved in its creation. Hence, this will add an additional product of $k$ number of factors of the form $p_i$ taken from a density $f(t)$ and a total of $Nm - k$ proportions of the form $(1-p_i)$ which are taken from the density $f(1-t)$.

\begin{rmk}\label{rmk:expectationnote}
W are looking at the indicator function of this random variable, as this helps us build intuition for the proof.
\end{rmk}

As we only care about this problem as the number of iterations goes to infinity, and remembering that we are dealing with the uniform density, we get the following integral:

\bea \label{eqn:integralmdimexpectation}
\mathbb{E}[\varphi_s(V_i)] & \ = \ & \int_{p_1 = 0}^{1} \int_{p_2 = 0}^{1} \dotsm \int_{p_{Nm} = 0}^{1} \varphi_s \left( V \prod_{r = 1}^{k} p_r \prod_{j=k+1}^{Nm} (1-p_j) \right)\nonumber\\ & & \ \ \ \ \cdot \ \prod_{r=1}^{k} f(p_r) \prod_{j=k+1}^{Nm} f(1-p_j) \ dp_1 dp_2 \dotsm dp_{Nm}.
\eea
In Equation~\eqref{eqn:integralmdimexpectation}, we are studying the product of $Nm$ independent random variables, the density cuts, which are being rescaled by some initial volume $V$. From here, we can directly appeal to Theorem~\ref{thm:productisbenfofnindep} and be done with the problem, but it is worthwhile to explore some details about convergence\footnote{Many details are explored in Appendix A$.2$ of~\cite{BeckA}. The authors chose to prove some of their claims by appealing to familiar results about the Riemann-Zeta function, but the claims could have been obtained by also performing the calculations directly for the case of the uniform $(0,1)$ density.}. We know that this integral results in a $ \log_{10}(s)$ term along with an additional term: we must show that this term is bounded and that it decays as $N \to \infty$.
\begin{rmk}\label{rmk:noteaboouttendingtoinfinity}
Whenever we write $N \to \infty$, it is also implied that $Nm \to \infty$ since $m$ is a non-zero natural number. While it is useful to think about $Nm \to \infty$, we often omit the $m$ because we care about the fact that the number of iterations of the process tends to infinity which results in an infinitely large number of volume pieces. 
\end{rmk}
The decay follows from the fact that the Mellin transforms, whenever $\ell \ \neq \ 0$, are smaller than $1$ in absolute value. In the case when $\ell \ = \ 0$, we obtain our main contribution which is further explored in some detail in Appendix A$.2$ of~\cite{BeckA}. However, we can go a bit further.




As we have previously noted, a product of density functions which are symmetric around $1/2$ tends to $0$ because every individual term of the product tends towards $0$. For a function that is not symmetric around $1/2$, we can symmetrize it. Hence, effectively, any product of this form satisfying the Mellin transform condition can be reduced to a product of symmetric distributions with values which are all tending towards $0$. Hence, using Lemma~\ref{rmk:steve'sideaforsymmetry} we are making a gain in the sense that, instead of having to look at two functions and trying to make sure that both of them behave in a nice way, we have reduced ourselves to looking at one type of function. In the context of the fragmentation process, this just means that we are effectively always looking at the ``leftmost'' piece in every branch (refer to Figure~\ref{fig:3dfragmentationunrestricted}). This idea becomes increasingly important in our proof of the variance later on in Section~\ref{subsec:varianceunresmdim}.

From there, it follows that the base $10$ logarithms of the random variable are equdistributed on the appropriate interval, which means

\bea\label{eqn:expectationresult}
\lim_{N \to \infty}\mathbb{E}[P_N(s)]   \ = \  \log_{10}(s).
\eea

\begin{rmk}
We diverged a bit from the proof performed in~\cite{Beck}: the authors used the pigeonhole principle to prove this convergence. We could have also used the same trick, but we would have to add the $m$ factor to take into account the nature of the higher-dimensional fragmentation.
\end{rmk}

\subsection{Variance}\label{subsec:varianceunresmdim}

Even though the expected value converges to Benford, this is still not a guarantee that our process always remains Benford: we must show that the variance converges to $0$ in order to prove that the process becomes Benford as the number of iterations tends to infinity.
The most convenient form for the variance, in the case of our problem, is the following: 
\bea\label{eqn:varianceformulamdim}
{\rm Var}\left(P_N(s)\right) & \ = \ &  \mathbb{E}[(P_N(s))^2] - (\mathbb{E}[P_N(s)])^2,
\eea
where we added parentheses here for emphasis, but we usually omit them.
Before we begin our derivation, it is worth trying to simplify every little detail we can. One thing we observe is that we have some information about the square of the indicator function: it is exactly the same as the original indicator function. Hence, we can make our calculations easier by noting that
\bea\label{eqn:indicatorsame}
(\varphi_s(X_i))^2 \ = \ \varphi_s(X_i).
\eea
Using Equations~\eqref{eqn:varianceformulamdim} and~\eqref{eqn:expectationvaluemdimfrag}, we get the following:
\bea 
{\rm Var}\left(P_N(s)\right) & \ = \ & \mathbb{E}[P_N(s)^2] - \mathbb{E}[P_N(s)]^2  \label{eqn:variancealgebra1} \\
& \ = \ & \mathbb{E}\left[\left(\frac{ \sum _{i = 1} ^{(2^m)^N} \varphi_s(V_i)}{(2^m)^N} \right)^2\right] -  \mathbb{E}[P_N(s)]^2 \label{eqn:variancealgebra2} \\ 
& \ = \ & \mathbb{E}\left[ \frac{\sum_{i = 1}^{(2^m)^N} \varphi_s(V_i)^2}{(2^m)^{2N}} + \frac{\sum\limits_{\substack{i,j = 1 \\ i \neq j}}^{(2^m)^N} \varphi_s(V_i) \varphi_s(V_j)}{(2^m)^{2N}} \right]  -  \mathbb{E}[P_N(s)]^2  \label{eqn:variancealgebra3}\\ 
& \ = \ & \frac{1}{(2^m)^{2N}}\mathbb{E}\left[\sum_{i = 1}^{(2^m)^N} 
\varphi_s(V_i)^2 \right] + 
\frac{1}{(2^m)^{2N}}\mathbb{E}\left[\sum\limits_{\substack{i,j = 1 \\ i 
\neq j}}^{(2^m)^N} \varphi_s(V_i) \varphi_s(V_j) \right] \nonumber \\
& & \ \ \ \ \ \ \ \ \ \ \ \ \ \   -  \mathbb{E}[P_N(s)]^2
\label{eqn:variancealgebra4}\\ 
& \ = \ & \frac{1}{(2^m)^N} \mathbb{E}\left[P_N(s)\right]  + 
\frac{1}{(2^m)^{2N}}\mathbb{E}\left[\sum\limits_{\substack{i,j = 1 \\ i 
\neq j}}^{(2^m)^N} \varphi_s(V_i) \varphi_s(V_j) \right]  -  \mathbb{E}[P_N(s)]^2.\label{eqn:variancealgebra5}
\ \ \ \ \ \ \ \
\eea
We explain each step. In 
Equation~\eqref{eqn:variancealgebra2}, we directly applied the 
definition of the variance and plugged in the expression for the 
expected value. Then, we wrote out the result of squaring the function 
in Equation~\eqref{eqn:variancealgebra3} and we finally used the linearity 
of expectation to obtain Equation~\eqref{eqn:variancealgebra4}. 
Using the fact from Equation~\eqref{eqn:indicatorsame}, we recognize that 
we actually have Equation~\eqref{eqn:expectationvaluemdimfrag} embedded in 
this expression which we use to further simplify and we finally get the 
result in Equation~\eqref{eqn:variancealgebra5}. We obtained an 
expression for the expectation in Section~\ref{subsec:expectationunresmdim} which we now use. 

Noting again that the rate of convergence depends on the number of iterations as well as the dimension of the volume in question
we can say that
\bea\label{eqn:expvalmdimerrorfuncadd}
\mathbb{E}[P_N(s)] \ = \ \log_{10} s + o(1).
\eea

We plug Equation~\eqref{eqn:expvalmdimerrorfuncadd} directly into Equation~\eqref{eqn:variancealgebra5} and after the algebra obtain
\begin{align}
{\rm Var}\left(P_N(s)\right) & \ = \   \frac{1}{(2^m)^N} (\log_{10} s + o(1))  + 
\frac{1}{(2^m)^{2N}}\mathbb{E}\left[\sum\limits_{\substack{i,j = 1 \\ i 
\neq j}}^{(2^m)^N} \varphi_s(V_i) \varphi_s(V_j) \right] \nonumber \\
&  \ \ \ \ \ \ \ \ \ \ \ \ \ \ - (\log_{10} s + o(1))^2 \nonumber \\ \label{eqn:variancalg1} \\
&\ = \   \frac{1}{(2^m)^{2N}}\mathbb{E}\left[\sum\limits_{\substack{i,j = 1 \\ i 
\neq j}}^{(2^m)^N} \varphi_s(V_i) \varphi_s(V_j) \right]  - \log_{10}^2 s + o(1) \label{eqn:variancalg2},
\end{align}

where, in going from Equation~\eqref{eqn:variancalg1} to Equation~\eqref{eqn:variancalg2}, we used the fact that the first term tends to $0$ as the number of iterations goes to $\infty$ so we put it together with $o(1)\cdot\log_{10} s$ and $(o(1))^2$ and we simply count it all as an additional $o(1)$ term in the expression. 

We have now simplified the variance to a somewhat familiar problem; we write out the double sum in order to make some details more explicit, and again appeal to the linearity of expectation: 

\bea\label{eqn:doublesumvarworkedout}
{\rm Var}\left(P_N(s)\right) & \ = \ & 
\frac{1}{(2^m)^{N}}\frac{1}{(2^m)^{N}}\sum\limits_{\substack{i= 1 }}^{(2^m)^N}\sum\limits_{\substack{j= 1 
\\ j \neq i}}^{(2^m)^N}\mathbb{E}\left[\varphi_s(V_i) \varphi_s(V_j) 
\right]  - \log_{10}^2 s + o(1).
\eea
When written in this form, we can see that Equation~\eqref{eqn:doublesumvarworkedout} seems like some sort of an average that is being adjusted by the term $- \log_{10}^2 s + o(1)$. Essentially, our problem is reduced to evaluating $\mathbb{E}\left[\varphi_s(V_i) \varphi_s(V_j) 
\right]$. We prove that this term, in the appropriate limit, converges to $\log_{10}^2 s$. Now comes the difficult part of this calculation: evaluating the cross terms\footnote{As an example of a simpler variation of the ideas used here, see Exercise $3.4.1$ in~\cite{GS}.} over all $i \neq j$.

Before proceeding with the analysis, it is instructive to look at Figure~\ref{fig:3dfragmentationunrestricted}. Even though this figure does not represent the most general case, it is still showing us the basic spirit of this process: different volume pieces are being produced from an original piece (the $V$ that is present in all of them) and a certain number of independent random variables (the density cuts). Now, depending on the ``location'' in this tree, two different pieces can share more or less of these density cuts. It turns out that the number of density cuts that two pieces have in common becomes crucial in further analysing the variance.

We focus on a pair of pieces denoted as $(V_i,V_j)$ such that they share $M$ terms (density cuts) of the same form. In the future, we refer to these density cuts simply as ``factors'' since every volume piece is effectively a product of density cuts that are being rescaled by the initial volume $V$. After the $M$-th factor, $V_i$ and $V_j$ diverge from each other in such a way that $V_i$ has the density cut $p_{M+1}$ whereas the piece $V_j$ has the density cut $(1-p_{M+1})$. Finally, the leftover $Nm - M - 1$ terms in each product are independent from one another.
\begin{rmk}\label{rmk:subtlenoteaboutsharingMpieces}
There is a subtle note about the volume pieces having $M$ factors in common. Throughout this paper, we relabel the volume pieces in such a way that the first $M$ factors of the pair $(V_i,V_j)$ are the ones they have in common. We are allowed to do this because the order in which the factors appear has no impact on our analysis since we only care about the type of the factor. 
\end{rmk}
Without loss of generality, and relabelling when necessary according to the note from Remark~\ref{rmk:subtlenoteaboutsharingMpieces}, we can say that $V_i$ and $V_j$ are of the following form:

\bea\label{eqn:vivj} 
V_i  & \ = \ &  V \cdot p_1 \cdot p_2 \dotsm p_M \cdot p_{M+1} \cdot p_{M+2} \dotsm p_{Nm} \nonumber\\
V_j & \ = \ & V \cdot p_1 \cdot p_2 \dotsm p_M \cdot (1 - p_{M+1}) \cdot \tilde p_{M+2} \dotsm \tilde p_{Nm},
\eea 
where the tilde purely serves to emphasise where the divergence between the pieces occurs. Noting that we again take the density cuts from $f(t)$, as in the derivation in Section~\ref{subsec:expectationunresmdim}, and using the definitions of the pair $(V_i,V_j)$ from~\eqref{eqn:vivj}, we get the following:

{\small
\begin{align}\label{eqn:crossterms}
\mathbb{E}[\varphi_s(V_i)\varphi_s(V_j)] &\ = \    \int_{p_1=0}^1 \int _{p_2=0}^1 \dotsm \int _{p_{Nm}=0}^1 \int _{\tilde p_{M+2}=0}^1 \dotsm \int _{\tilde p_{Nm}=0}^1 \varphi _s \left(  V \prod _{r=1}^{M} p_r \cdot p_{M+1} \cdot \prod _{r = M+2} ^{Nm} p_r \right)  \nonumber \\
&  \ \ \ \ \cdot \  \varphi _s\left( V \prod _{r=1}^{M} p_r \cdot \left( 1-p_{M+1} \right) \cdot \prod _{r = M+2} ^{Nm} \tilde p_r \right) \nonumber \\
&  \ \ \ \ \cdot \  \prod _{r=1} ^{Nm} f (p_r)  \prod _{r=M+2} ^{Nm} f (1-\tilde p_r) \  dp_1 dp_2 \cdots dp_{Nm} d\tilde p_{M+2}\cdots d\tilde p_{Nm}.
\end{align}}
\noindent The integral in Equation~\eqref{eqn:crossterms} is tricky, but there is a useful idea we can use to approximate it, as we don't need an exact answer: we just need to make sure the answer is in the ballpark of the result we are looking for. Thinking about Figure~\ref{fig:3dfragmentationunrestricted} again, we can imagine this process continuing for many iterations. In such a case, having common terms amongst the pieces would be rather unlikely, which means that, at some point, the common terms become irrelevant. Essentially, we can then treat $\varphi_s(V_i)$ and $\varphi_s(V_j)$ as independent random variables so the expectation of their product is equal to the product of their expectations. Below we give the flavor of this idea and then we make it all rigorous later on. 
First split up the expectation of Equation~\eqref{eqn:crossterms} into two expectations (integrals) which can be approximated and then multiplied. Define the relevant volumes

\bea \label{eqn:steptowardsvarianceintegral}
V_1 \ :=\ V\left(\prod_{r=1}^M p_r\right) p_{M+1}, \ \ \ \ \ \ 
V_2 \ :=\ V\left(\prod_{r=1}^M p_r\right) (1-p_{M+1}).
\eea 

\begin{rmk}
Equation~\eqref{eqn:crossterms} is missing a certain number of density cuts: this is very much intentional and will become clear by the end of Section~\ref{subsec:errortermbound}.
\end{rmk}
Following the heuristic approach proposed by Becker et al.~\cite{Beck}, we can construct the integrals

\bea \label{eqn:varintegrals}
I(V_1) &  :=  & \int _{ p_{M+1} = 0} ^{1} \cdots \int _{ p_{Nm} = 0} ^{1} \varphi _s \left( V_1 \prod _{r = M+1} ^{Nm} p_r \right) \prod _{r = M+1} ^{Nm} f (p_r) \   dp_{M+1}dp_{M+2}\cdots dp_{Nm} \nonumber\\
J(V_2) & \ :=\  & \int _{ \tilde p_{M+1} = 0} ^{1} \cdots \int _{\tilde p_{Nm} = 0} ^{1} \varphi _s \left( V_2 \prod _{r = M+1} ^{Nm} \tilde p_r \right) \prod _{r = M+1} ^{Nm} f (\tilde p_r) \  d\tilde p_{M+1}d \tilde p_{M+2}\cdots d \tilde p_{Nm}.\nonumber\\
\eea
However, observe that we have a product of independent random variables in Equation~\eqref{eqn:varintegrals} which, by our previous work, converges to Benford behaviour.

This indeed implies that $$|I(V_1)J(V_2)-(\log_{10} s)^2| \ = \ o(1),$$ so the variance goes to zero as was expected. We make all of this rigorous below. 

\subsection{Removing terms}\label{sec:removingterms}
The biggest issue we must deal with in evaluating the variance is the problem of the cross-terms. By looking at Figure~\ref{fig:3dfragmentationunrestricted}, we observe that the different volume pieces must depend on each other because we can always find some sort of a path to come back to a volume piece which is the ``parent'' of some two volumes $V_i$ and $V_j$. Hence, we have to keep track of these dependencies, but our claim is that certain dependencies matter more than others. The key question is, when we choose two pieces at random on some level $\nu \leq N$, is it more likely that they have a lot of terms in common or only a few terms in common? 

First, observe from Figure~\ref{fig:3dfragmentationunrestricted} that volume pieces always have the initial volume $V$ in common, as well as a certain number of factors of the form $p_k$ and $1-p_k$. As always, note that we are using the representation for the $m \ = \ 3$ case to make the visualization somewhat easier, but this extends for any finite-dimensional volume $m$. With that being said, we note that it is always possible to relabel the density cuts in order to highlight the differences between the volume pieces, which is exactly what we did in defining the pair $(V_i,V_j)$:

\bea\label{eqn:vivjagain}
V_i  & \ = \ &  V \cdot p_1 \cdot p_2 \dotsm p_M \cdot p_{M+1} \cdot p_{M+2} \dotsm p_{Nm} \nonumber\\
V_j & \ = \ & V \cdot p_1 \cdot p_2 \dotsm p_M \cdot (1 - p_{M+1}) \cdot \tilde p_{M+2} \dotsm \tilde p_{Nm}.
\eea 

We consider the dependencies and the likelihood of $V_i$ and $V_j$ having $M$ terms in common. When we look at the branching process in Figure~\ref{fig:3dfragmentationunrestricted}, we notice that at every step, we either apply a density cut of the form $p_i$ or of the form $1-p_i$. Hence, at every step, there is a $50 \%$ chance of having more than a certain number of cuts in common. At the next step, it is $50 \%$ of that previous probability and we effectively end up having some sort of a geometric sequence converging to $0$ in the appropriate limit. The probability that the two randomly chosen volume pieces $(V_i,V_j)$ have exactly $M$ factors in common is $2^{-M}$. However, the probability of them agreeing to up to \emph{at most} $M$ factors is $1-2^{-M}$.
\begin{rmk}\label{rmk:whynom}
The $m$, denoting the dimension of the volume we are working with, doesn't show up in our analysis, but this $m$ is implicitly embedded into the structure of $M$ because we are focusing on the density cuts that are present in the different volume pieces. To make this clearer, we remind the reader that $1 \leq M \leq mN$ in general, but we develop a bound for $M$ shortly in order to guarantee convergence. 
\end{rmk}
This means that, for an appropriately chosen $M$, we can completely ignore volume pieces that have a very high dependency, as expressed through a large number of density cuts that they have in common. We effectively want the term $2^M$ to go off to infinity, and we can easily achieve this as long as $M(N) \ = \ o(N)$. It turns out that any function of $M$ in terms of $N$ that doesn't grow as fast as the linear function $g(N) \ = \ N$ is a good candidate. This insight allows us to treat the integrals $I(V_1)$ and $J(V_2)$ as essentially independent in the probabilistic sense. Hence, we choose to have $M(N) \ = \ \log N$. We extend our argument even further by looking at the double sum in Equation~\eqref{eqn:doublesumlook}:
\bea\label{eqn:doublesumlook}
{\rm Var}\left(P_N(s)\right) & \ = \ & 
\frac{1}{(2^m)^{N}}\frac{1}{(2^m)^{N}}\sum\limits_{\substack{i= 1 
}}^{(2^m)^N}\sum\limits_{\substack{j= 1 
\\ j \neq i}}^{(2^m)^N}\mathbb{E}\left[\varphi_s(V_i) \varphi_s(V_j) 
\right]  - \log_{10}^2 s + o(1).
\eea
The expected value in the double sum is bounded by $1$ so we conclude that the contribution of the expected value from these volume pieces with high dependencies tends to $0$ as $M(N) \to \infty$. This allows us to remove these terms from our consideration and to include them in the $o(1)$ term. We now move on to discussing the bounds of these error terms and a clever application of the triangle inequality.

\subsection{Bounding the error terms}\label{subsec:errortermbound}

We know that every individual piece is actually bounded, but we are concerned that something in the integration might blow up as the number of pieces becomes infinitely large. That ``something'' is an infinite sum of error terms. 
This is why we need to make sure that the errors stemming from the integrals in Equation~\eqref{eqn:varintegrals} are sufficiently bounded so that they disappear when divided by the total number of pieces. 

From Equation~\eqref{eqn:boundcondition}, we know that the probability of the logarithmically rescaled random variable is bounded\footnote{Note that using the logarithmic rescaling in our results makes sense twofold: not only because of its natural use in Benford's law, but also because we are dealing with the uniform $(0,1)$ distribution}. Using that condition, we can explicitly write out a bound for $I(V_1)$ and $J(V_2)$, which we make precise in Lemma~\ref{lemma:agreeuptoatmostMlevels}.

We first apply Theorem~\ref{thm:jkkkmmain} to Equation~\eqref{eqn:varintegrals} to get

\bea\label{eqn:bound}
\abs{I(V_1) - \log_{10}s} \ & \leq & \  (b-a) \cdot \sum_{\ell = -\infty \atop \ell \neq 0}^\infty \abs{\prod_{k=M+1}^{Nm} \mathcal{M} f_{k}\left(1-\frac{2\pi i \ell}{\log B}\right)} \nonumber \\
\abs{J(V_2) - \log_{10}s} \ & \leq & \  (b-a) \cdot \sum_{\ell = -\infty \atop \ell \neq 0}^\infty \abs{\prod_{k=M+1}^{Nm} \mathcal{M} f_{k}\left(1-\frac{2\pi i \ell}{\log B}\right)}. 
\eea
We now recall our original Mellin Transform condition, as expressed in Theorem~\ref{thm:productisbenfofnindep}:

\bea\label{eqn:mellintransusedforbound}
\lim_{N\to\infty} \sum_{\ell = -\infty \atop \ell \neq 0}^\infty \abs{\prod_{k=1}^N \mathcal{M}f_{k}\left(1-\frac{2\pi i \ell}{\log 10}\right)} \ = \ 0.
\eea
Because the expression in~\eqref{eqn:mellintransusedforbound} converges, we can define a bound $D_{T(M)}$, where $T(M) \ \coloneqq \ Nm - (M+1)$, by
\bea\label{eqn:boundD}
D_{T(M)} \ = \ (b-a) \cdot \sum_{\ell = -\infty \atop \ell \neq 0}^\infty \abs{\prod_{k=1}^{T(M)} \mathcal{M} f_{k}\left(1-\frac{2\pi i \ell}{\log B}\right)},
\eea
which allows us to rewrite the Expressions in~\eqref{eqn:bound} as

\bea\label{eqn:Dt}
\abs{I(V_1) - \log_{10}s} \ & \leq & \ D_{T(M)} \nonumber \\
\abs{J(V_2) - \log_{10}s} \ & \leq & \ D_{T(M)}. 
\eea

\noindent 
Observe that we use Equation~\eqref{eqn:Dt} only when we agree up to at most $M$ factors, which means we have already removed the pairs with high dependency since
we know that those pieces are bounded with at most $1$ so they go to zero when divided by the total number of pieces. Furthermore, observe the dependency of $D_{T(M)}$ on $M$ since $T$ is a fixed number depending on our choice of terms that are in common amongst the pairs. For completion, we also remind the reader that the key idea of the proof is that $M$ grows rapidly, but still slower than $N$, so a choice of $M \ = \ \log N$ works. Using the triangle inequality, we can write
\bea\label{eqn:vartrig}
|I(V_1)J(V_2)-(\log_{10} s)^2| \ & = & \ \abs{I(V_1)J(V_2) - J(V_2)\log_{10}s + J(V_2)\log_{10}s - (\log_{10} s)^2} \nonumber \\
& \leq & \ \abs{I(V_1)J(V_2) - J(V_2)\log_{10}s} + \abs{J(V_2)\log_{10}s - (\log_{10} s)^2} \nonumber \\
& \leq & \ \abs{I(V_1)J(V_2) - J(V_2)\log_{10}s} + \abs{J(V_2)\log_{10}s - (\log_{10} s)^2} \nonumber \\
& \leq & \ \abs{J(V_2)}\abs{I(V_1) - \log_{10}s} + \abs{\log_{10}s}\abs{J(V_2) - \log_{10} s} \nonumber \\
& \leq & \ \abs{I(V_1) - \log_{10}s} + \abs{J(V_2) - \log_{10} s},
\eea
where in the last line we used the fact that $\abs{J(V_2)}$ and $\abs{\log_{10}s}$ are both bounded by $1$. Hence, it follows that

\bea\label{eqn:trigineqbound}
\abs{I(V_1)J(V_2)-(\log_{10} s)^2} \ & \leq & \ 2D_{T(M)}. 
\eea

\noindent We summarize all of this in Lemma~\ref{lemma:agreeuptoatmostMlevels}.

\begin{lem}[Bounding the error term]\label{lemma:agreeuptoatmostMlevels}
Assuming the Mellin condition holds, there are two distinct possible cases for the bound expressed in Equation~\eqref{eqn:trigineqbound}.
\begin{itemize}
    \item In the case of high dependence, meaning that $V_i$ and $V_j$ have many factors in common so they agree to more than $M$ factors, Equation~\eqref{eqn:trigineqbound} has the trivial bound $1$. 
    \item In the case of low dependence, meaning that $V_i$ and $V_j$ have few terms in common so we agree up to at most $M$ factors, Equation~\eqref{eqn:trigineqbound} is unchanged, hence the difference is bounded by $D_{T(M)}$.
\end{itemize}
\end{lem}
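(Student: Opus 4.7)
The plan is to verify the two cases in the lemma separately, since the statement is essentially a bookkeeping summary of the estimates already assembled in Sections~\ref{sec:removingterms} and~\ref{subsec:errortermbound}. I would first dispatch the high-dependence case and then appeal to Theorem~\ref{thm:productisbenfofnindep} for the low-dependence case, stitching everything together with the triangle inequality calculation carried out in~\eqref{eqn:vartrig}.

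For the high-dependence case, the argument is immediate: the significand indicator $\varphi_s$ takes values in $\{0,1\}$, so each integral $I(V_1)$ and $J(V_2)$ in~\eqref{eqn:varintegrals} is an expectation of a $[0,1]$-valued random variable and therefore lies in $[0,1]$. Since $\log_{10} s \in [0,1)$ for $s \in [1,10)$, both $I(V_1)J(V_2)$ and $(\log_{10} s)^2$ sit inside $[0,1]$, and so $\abs{I(V_1)J(V_2) - (\log_{10} s)^2} \leq 1$ trivially. This is the content of the first bullet; no refinement is possible (or needed) when $V_i$ and $V_j$ share too many density cuts, but as observed in Section~\ref{sec:removingterms} the number of such pairs is at most $2^{-M}(2^m)^{2N}$, so they contribute $O(2^{-M})$ to~\eqref{eqn:doublesumlook} which vanishes for $M = M(N)$ tending to infinity.

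For the low-dependence case, the key observation is that once we condition on the shared parents $V_1 = V(\prod_{r=1}^M p_r)p_{M+1}$ and $V_2 = V(\prod_{r=1}^M p_r)(1-p_{M+1})$, the integrals $I(V_1)$ and $J(V_2)$ each reduce to the expectation of $\varphi_s$ applied to the product of $T(M) = Nm-(M+1)$ independent density cuts rescaled by the respective parent volume. Theorem~\ref{thm:productisbenfofnindep} (applied with $N$ replaced by $T(M)$) then gives $\abs{I(V_1) - \log_{10} s} \leq D_{T(M)}$ and $\abs{J(V_2) - \log_{10} s} \leq D_{T(M)}$, where $D_{T(M)}$ is the Mellin-transform sum from~\eqref{eqn:boundD}. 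The Mellin condition in the hypothesis of Theorem~\ref{thm:unrestrictedmdim} forces $D_{T(M)} \to 0$ as $N \to \infty$, since $T(M) = Nm - (M+1) \to \infty$ whenever $M = o(N)$.

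Finally, I would combine these two one-sided estimates using the triangle inequality in the form laid out in~\eqref{eqn:vartrig}: writing $I(V_1)J(V_2) - (\log_{10} s)^2 = J(V_2)\bigl(I(V_1) - \log_{10} s\bigr) + \log_{10} s\bigl(J(V_2) - \log_{10} s\bigr)$ and using $\abs{J(V_2)}, \abs{\log_{10} s} \leq 1$ yields the stated bound (up to a harmless constant factor of $2$ which one may absorb into $D_{T(M)}$, or equivalently interpret the lemma's $D_{T(M)}$ as standing for any such $o(1)$ bound). No step is genuinely hard: the main conceptual point is simply that fixing the $M$ shared factors reduces each of $I(V_1), J(V_2)$ to a genuine product of independents of length $T(M)$, so Theorem~\ref{thm:productisbenfofnindep} applies verbatim. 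The only mild care is in the indexing, namely keeping track that after relabelling per Remark~\ref{rmk:subtlenoteaboutsharingMpieces}, the $T(M)$ ``free'' density cuts in $I(V_1)$ are independent of those in $J(V_2)$, which is guaranteed by the independence of the density cuts across the tree.
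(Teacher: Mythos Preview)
Your proposal is correct and follows essentially the same approach as the paper: the high-dependence case is handled by the trivial observation that $I(V_1), J(V_2), (\log_{10} s)^2 \in [0,1]$, and the low-dependence case is exactly the paper's strategy of applying Theorem~\ref{thm:productisbenfofnindep} to each of $I(V_1)$ and $J(V_2)$ separately (yielding~\eqref{eqn:Dt}) and then combining via the triangle-inequality manipulation~\eqref{eqn:vartrig}. You also correctly flag the harmless factor of $2$ discrepancy between~\eqref{eqn:trigineqbound} and the lemma's stated bound of $D_{T(M)}$.
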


\noindent Our goal is to estimate

\bea\label{eqn:morebound}
\abs{\frac{1}{(2^m)^{N}}\frac{1}{(2^m)^{N}}\sum\limits_{\substack{i= 1 
}}^{(2^m)^{N}}\sum\limits_{\substack{j= 1 
\\ j \neq i}}^{(2^m)^{N}}\mathbb{E}\left[\varphi_s(V_i) \varphi_s(V_j) 
\right] - \log^2_{10} s },
\eea

\noindent and we note that the insight from Equation~\eqref{eqn:trigineqbound} and Lemma~\ref{lemma:agreeuptoatmostMlevels} will be crucial, along with the conversation around Equation~\eqref{eqn:doublesumvarworkedout}. The biggest problem we have is that it becomes somewhat difficult to deal with all the cases when $j \ \neq i$, but we can resolve this issue by deeply thinking about the impact these cases have on our analysis. Let us remove this restriction for the $j$ and observe that, for a fixed $i$, the number of indices $j$ whose factors agree with those from $i$ for at least $M+1$ factors is $2^{mN}/2^{M+1}$. Since the summand is bounded by $1$ and we now divide with the total number of pairs, which is $2^{2mN}$, we see this term is of the form $O(1/2^{M}) \ = \ o(1)$ since $M \ = \ \log N$ which tends to $0$ as $N \to \infty$. Hence, this can be absorbed into the error term already present in Equation~\eqref{eqn:doublesumvarworkedout}. Now, we have reduced our problem to evaluating 
\bea\label{eqn:removedthej}
\abs{\frac{1}{(2^m)^{N}}\frac{1}{(2^m)^{N}}\sum\limits_{\substack{i= 1 
}}^{(2^m)^{N}}\sum\limits_{\substack{j= 1 }}^{(2^m)^{N}}\left(\mathbb{E}\left[\varphi_s(V_i) \varphi_s(V_j) 
\right] - \log^2_{10} s\right) },
\eea
where the major change is that we removed the restriction in Equation~\eqref{eqn:removedthej} which allowed us to put the logarithm into the double sum. We can now start thinking about the pieces $V_i$ and $V_j$ somewhat more directly. Our goal is to break up that double sum into different sums, based on the level of dependency between the pieces. We introduce a metric for the dependency through the variable $\mu$ which represents the number of terms $V_i$ and $V_j$ have in common with each other. This allows us to break up the double sum as 
\bea\label{eqn:breakingupdoublesum}
\abs{\frac{1}{(2^m)^{N}}\frac{1}{(2^m)^{N}}\sum_{i}\sum\limits_{\substack{\mu= 0 }}^{mN}\sum_{j_{\mu(i)}}\left(\mathbb{E}\left[\varphi_s(V_i) \varphi_s(V_j) 
\right] - \log^2_{10} s\right) },
\eea
where we note that $j$ depends on the number of factors $V_i$ and $V_j$ have in common which, naturally, depends on our choice of $V_i$. Furthermore, $j_{\mu(i)}$ is the set of indices $j$ such that $V_i$ and $V_j$ agree for $\mu$ factors.
The key is now to split the $\mu$ sum into two pieces based on the level of 
dependency between $V_i$ and $V_j$. When $0 \leq \mu \leq M$, this represents the case where $V_i$ and $V_j$ are virtually independent so we can directly appeal to Equation~\eqref{eqn:trigineqbound}. On the other hand, when $M+1 \leq \mu \leq Nm$, we are dealing with the case with high dependency so the error is bounded by $1$. We make the analysis more explicit by carefully presenting the two diffferent cases. 

\subsubsection{High dependence between pieces}\label{subsubsec:highdependence}
When the pieces have a high dependence, meaning that they agree to more than $M$ factors, then the difference in Equation~\eqref{eqn:trigineqbound} is bounded by $1$.
\bea\label{eqn:trigineqboundhighdep}
\abs{I(V_1)J(V_2)-(\log_{10} s)^2} \ & \leq & \ 1. 
\eea
From Section~\ref{sec:removingterms}, we know this is a rather unlikely case so the total number of such pairs is equal to $2^{2mN}/2^{M+1}$, which goes to zero when we divide it by the total number of pairs. Explicitly,
\bea\label{eqn:highdependence}
\abs{\frac{1}{(2^m)^{N}}\frac{1}{(2^m)^{N}}\sum_{i}\sum\limits_{\substack{\mu= M+1 }}^{mN}\sum_{j_{\mu(i)}}\left(\mathbb{E}\left[\varphi_s(V_i) \varphi_s(V_j) 
\right] - \log^2_{10} s\right)} \ &\leq& \ \abs{\frac{1}{(2^m)^{N}}\frac{1}{(2^m)^{N}} \frac{2^{2mN}}{2^{M+1}}} \nonumber \\
\ & = & \  \abs{\frac{1}{2^{M+1}}},
\eea
which tends to zero as $N \to \infty$ because we defined $M \ = \ \log N$.

\subsubsection{Low dependence between pieces}\label{subsubsec:lowdependence}
When the pieces have a low dependence, meaning that they agree to at most $M$ factors then the difference in Equation~\eqref{eqn:trigineqbound} is bounded by $D_{T(M)}$ and we can use it directly:
\bea\label{eqn:trigineqboundlowdep}
\abs{I(V_1)J(V_2)-(\log_{10} s)^2} \ & \leq & \ D_{T(M)} . 
\eea
From Section~\ref{sec:removingterms}, we know this case is likely. The probability of this happening is $1-1/2^{M+1}$, so the total number of such pairs is $2^{2mN}-2^{2mN}/2^{M+1}$. Notice that this is exactly the same number we would have gotten by applying the logic that we are summing up over the leftover pieces, after removing the pieces with high dependency, i.e., $2^{2mN}-2^{2mN}/2^{M+1}$. This expression also goes to zero when we divide by the total number of pieces. Explicitly,
{\fontsize{10.5pt}{10.8pt}
\begin{align}\label{eqn:lowdependence}
\abs{\frac{1}{(2^m)^N}\frac{1}{(2^m)^N}\sum_{i}\sum_{\substack{\mu=0}}^{M}\sum_{j_{\mu(i)}}\left(\mathbb{E}\left[\varphi_s(V_i) \varphi_s(V_j)\right] - \log^2_{10} s\right)} 
& \ \leq \ \abs{\frac{1}{(2^m)^N}\frac{1}{(2^m)^N}\left(2^{2Nm}-\frac{2^{2Nm}}{2^{M+1}}\right)D_{T(M)}} \nonumber \\
& \ = \ \abs{\left(1-\frac{1}{2^{M+1}}\right)D_{T(M)}},
\end{align}}

\noindent which tends to zero as $N \to \infty$.

\noindent Finally, based on Equation~\eqref{eqn:highdependence} and Equation~\eqref{eqn:lowdependence}, we conclude that, as $N \to \infty$,

\bea \label{eqn:morebound2}
\frac{1}{(2^m)^{N}}\frac{1}{(2^m)^{N}}\sum\limits_{\substack{i= 1 }}^{(2^m)^{N}}\sum\limits_{\substack{j= 1 
\\ j \neq i}}^{(2^m)^{N}}\mathbb{E}\left[\varphi_s(V_i) \varphi_s(V_j) 
\right] - \log^2_{10} s \ = \ 0.
\eea
Combining Equation~\eqref{eqn:boundcondition} and Equation~\eqref{eqn:doublesumvarworkedout} we get that

\bea\label{eqn:varianceresultfinally}
{\rm Var}\left(P_N(s)\right) & \ \leq \ & o(1),
\eea
which, by definition is equal to $0$ in the $N \to \infty$ limit.

Finally, combining Equation~\eqref{eqn:expectationresult} and Equation~\eqref{eqn:varianceresultfinally}, we conclude that our unrestricted $m$-dimensional fragmentation process does indeed result in Benford behavior.





\section{Exploring the Conjecture}\label{sec:conjecture}
Our expectation is that even a lower-dimensional volume would converge to Benford under this particular process. In our work, some simulations from Mathematica seem to show the convergence of the perimeter of new pieces stemming from a fragmentation process in a rectangle. While it seems that there is convergence of volumes of dimensions close to the highest dimension of the object, the question remains as to what happens to quantities whose dimension is substantially smaller than the highest dimension of the object being studied. 

The truly interesting question is to see how close the lower-dimensional volume and the higher-dimensional volume have to be (i.e., we want to know what impact the size of the difference between $d$ and $m$ has on the rate of convergence to Benford, where $d$ and $m$ are defined in Conjecture~\ref{conj:benfconjddimlessthanmdim}).


\section{Proposed Further Research Avenues}\label{sec:futurework}

The fragmentation process we looked at is defined as the unrestricted fragmentation process in the work of~\cite{Beck}. One potential research avenue would be to check whether or not the other described fragmentation processes can also be generalized and extended to $m$-dimensions. Furthermore, one could also pose the same question about the lower-dimensional volumes following the Benford distribution in this case. See~\cite{Betti} for some recent progress.


\newpage
\bigskip \bigskip \bigskip


\begin{thebibliography}{999}

\bibitem[1]{Ah}
L. V. Ahlfors, \emph{Complex Analysis: An introduction to the theory of analytic functions of one complex variable}, Third edition, International Series in Pure and Applied Mathemathics National McGraw-Hill Book Company, 1979.

\bibitem[2]{AS}
M. Abromovich and I. A. Stegun, \emph{Handbook of mathematical functions with formulas, graphs, and mathematical tables}, tenth printing, National Bureau of Standards, Applied Mathematics Series \textbf{55}, 1972.

\bibitem[3]{Ben}
F. Benford, \emph{The Law of Anomalous Numbers}, Proceedings of the American Philosophical Society \textbf{78} (1938), 551-572.

\bibitem[4]{Betti}
L. Betti, I. Durmić, Z. McDonald, J. B. Miller and S. J. Miller, \emph{Benfordness of Measurements Resulting from Box Fragmentation}, pre-print, 2023. URL: \url{https://arxiv.org/abs/2304.08335}

\bibitem[5]{Beck}
T. Becker, D. Burt, T. C. Corcoran, A. Greaves-Tunnell, J.R. Iafrate, J. Jing, S.J. Miller, J.D. Porfilio, R. Ronan, J. Samranvedhya, F.W. Strauch and B. Talbut, \emph{Benford’s law and continuous dependent random variables}, Annals of Physics \textbf{338} (2018), 350-381.

\bibitem[6]{BeckA}
T. Becker, D. Burt, T. C. Corcoran, A. Greaves-Tunnell, J.R. Iafrate, J. Jing, S.J. Miller, J.D. Porfilio, R. Ronan, J. Samranvedhya and F.W. Strauch, \emph{Benford’s law and continuous dependent random variables}, pre-print, 2013, URL: \url{https://arxiv.org/pdf/1309.5603v1.pdf}

\bibitem[7]{BH1}
A. Berger and T. P. Hill, \emph{Newton's method obeys Benford's Law}, American Mathematical Monthly \textbf{114} (2007), 588--601.

\bibitem[8]{BH2}
A. Berger, and T. P. Hill, {\em Benford Online Bibliography}, URL: {\url{http://www.benfordonline.net}}.

\bibitem[9]{BH3}
A. Berger and T. P. Hill, \emph{Fundamental flaws in Feller's classical derivation of Benford's Law}, pre-print, University of Alberta, 2010.

\bibitem[10]{BH4} A. Berger and T. P. Hill, \emph{Benford's Law strikes back: No simple explanation in sight for mathematical gem}, The Mathematical Intelligencer \textbf{33} (2011), 85--91.

\bibitem[11]{BH5} A. Berger and T. P. Hill, \emph{A Basic Theory of Benford's Law}, Probability Surveys \textbf{8} (2011), 1--126.

\bibitem[12]{BH6} A. Berger and T. P. Hill, \emph{An Introduction to Benford's Law}, Princeton University Press, Princeton, 2015.

\bibitem[13]{CLM12}
V. Cuff, A. Lewis and S. J. Miller, \emph{The Weibull distribution and Benford's law}, Involve, a Journal of Mathematics 8-5 (2015), 859--874. DOI 10.2140/involve.2015.8.859.

\bibitem[14]{Dia} \newblock P. Diaconis, \emph{The distribution of leading digits and uniform distribution mod 1}, Ann. Probab. \textbf{5} (1979), 72-81.

\bibitem[15]{Ep}
B. Epstein, \emph{Some Applications of the Mellin Transform in Statistics}, The Annals of Mathematical Statistics {3}, 370--379, volume \tbf{19}, Institute of Mathematical Statistics, 1948, Princeton, NJ.

\bibitem[16]{Ga}
T.A. Garrity, \emph{All the Math you Missed (But Need to Know for Graduate School)}, Cambridge University Press, Second Edition, 2021.

\bibitem[17]{GS}
\newblock G. Grimmet and D. Stirzaker, \emph{Probability and Random Processes}, Oxford University Press, Third Edition, 2001.

\bibitem[18]{Hi1} T. Hill, \emph{The first-digit phenomenon}, American Scientists \textbf{86}  (1996), 358-363.

\bibitem[19]{Hi2} T. Hill, \emph{A statistical derivation of the significant-digit law}, Statistical Science \textbf{10} (1996), 354-363.

\bibitem[20]{JKKKM}
D. Jang, J. U. Kang, A. Kruckman, J. Kudo and S. J. Miller, \emph{Chains of distributions, hierarchical Bayesian models and Benford's Law}, Journal of Algebra, Number Theory: Advances and Applications, volume \tbf{1}, number 1 (March 2009), 37--60.

\bibitem[21]{Jing}
J. Jing, \emph{Benford's Law and Stick Decomposition}.

\bibitem[22]{KonMi}
A. Kontorovich and S. J. Miller, \emph{Benford's Law, values of $L$-functions and the $3x+1$ problem}, Acta Arith. \textbf{120} (2005), 269--297.

\bibitem[23]{Kh}
A. Y. Khinchin, \emph{Continued Fractions},  Third
Edition, The University of Chicago Press, Chicago 1964.

\bibitem[24]{La}
S. Lang, \emph{Complex Analysis}, Fourth Edition, Springer, 1998

\bibitem[25]{LSE}
L. M. Leemis, B. W. Schmeiser and D. L. Evans, \emph{Survival Distributions Satisfying Benford's Law}, The American Statistician \textbf{54} (2000), no. 3.

\bibitem[26]{MaHo}
J. E. Marsden and M. J. Hoffman \emph{Basic Complex Analysis}, W.H. Freeman and Company, New York, 1987.

\bibitem[27]{Mil}
S. J. Miller, \emph{Benford's Law: Theory and Applications}, Princeton University Press, Princeton, NJ, 2015.

\bibitem[28]{MilP}
S. J. Miller, \emph{The Probability Lifesaver: All the tools you need to understand chance}, Princeton University Press, Princeton, NJ, 2017.

\bibitem[29]{MT-B}
S. J. Miller and R. Takloo-Bighash, \emph{An Invitation to Modern Number Theory}, Princeton University Press, Princeton, NJ, 2006.

\bibitem[30]{MN1}
S. J. Miller and M. Nigrini, \emph{The Modulo $1$ Central Limit Theorem and Benford's Law for Products}, International Journal of Algebra \textbf{2} (2008), no. 3, 119--130.

\bibitem[31]{MN2}
S. J. Miller and M. Nigrini, \emph{Order statistics and Benford's Law}, International Journal of Mathematics and Mathematical Sciences, Volume 2008 (2008), Article ID 382948, 19 pages. doi:10.1155/2008/382948

\bibitem[32]{MN3}
S. J. Miller and M. Nigrini, \emph{Data diagnostics using second order tests of Benford's Law}, Auditing: A Journal of Practice and Theory \textbf{28} (2009), no. 2, 305--324. doi: 10.2308/aud.2009.28.2.305

\bibitem[33]{Ne} 
S. Newcomb, \emph{Note on the frequency of use
of the different digits in natural numbers}, Amer. J. Math.
\textbf{4} (1881), 39-40.

\bibitem[34]{Nig}
M. J. Nigrini, \emph{Benford's Law: Applications for Forensic Accounting, Auditing, and Fraud Detection}, John Wiley \& Sons, Inc., Hoboken, New Jersey, 2012.

\bibitem[35]{NigM}
M. J. Nigrini, S.J. Miller, Data diagnostics using
second order tests of Benford's law, \emph{Auditing: A Journal of Practice and Theory}, \textbf{28} (2009), no. 2, 305--324.

\bibitem[36]{Nev} 
E. H. Neville, \emph{963. Tannery's Theorem}, The Mathematical Gazette, \textbf{15} (1930), 166-166.

\bibitem[37]{Rai}
R. A. Raimi, \emph{The First Digit Problem}, The American Mathematical Monthly, \textbf{83:7}  (1976), no. 7, 521-538.

\bibitem[38]{Rp}
Y.A. Rozanov, \emph{Probability Theory A Concise Course}, translated by Richard A. Silverman, Dover Publications, Inc, New York, 1969. 

\bibitem[39]{StSh}
E. M. Stein and R. Shakarchi, \emph{Complex analysis}, Princeton Lectures in Analysis Volume \textbf{2} (2003), Princeton University Press, Princeton, NJ.

\end{thebibliography}
\end{document}